\documentclass[11pt,twoside,a4paper]{article}

\author{Shai Sarussi}
 \title{Maximal covers of chains of prime ideals}
\date{}

\usepackage{amsmath,amsthm,amssymb}
 \usepackage{verbatim}

\begin{document}

\newtheorem{thm}{Theorem}[section]
\newtheorem{cor}[thm]{Corollary}
\newtheorem{lem}[thm]{Lemma}
\newtheorem{prop}[thm]{Proposition}
\newtheorem{ax}{Axiom}

\theoremstyle{definition}
\newtheorem{defn}[thm]{Definition}

\theoremstyle{remark}
\newtheorem{rem}[thm]{Remark}
\newtheorem{ex}[thm]{Example}
\newtheorem*{notation}{Notation}

\newcommand{\qv}{{quasi-valuation\ }}


\maketitle

\begin{abstract} { Suppose $f:S \rightarrow R$ is a ring homomorphism such that $f[S] $ is contained in the center of $R$.
We study the connections between chains in $\text{Spec} (S)$ and $\text{Spec} (R)$. We focus on the properties
LO (lying over), INC (incomparability), GD (going down), GU (going up) and SGB (strong going between).
We provide a sufficient condition for every maximal chain in $\text{Spec} (R)$ to cover a maximal chain in $\text{Spec} (S)$.
We prove some necessary and sufficient conditions for $f$ to satisfy each of the properties GD, GU and SGB, in terms of maximal $\mathcal D$-chains, where $\mathcal D \subseteq \text{Spec} (S)$ is a nonempty chain.
We show that if $f$ satisfies all of the above properties, then every maximal $\mathcal D$-chain is a perfect maximal cover of $\mathcal D$.
One of the main results of this paper is Corollary \ref{equivalent conditions}, in which we give equivalent conditions for the following property:
for every chain $\mathcal D \subseteq \text {Spec} (S)$ and for every maximal $\mathcal
D$-chain $\mathcal C \subseteq \text {Spec} (R)$, $\mathcal C$ and $\mathcal D$ are of the same cardinality.

}\end{abstract} 

\section*{Introduction}

All rings considered in this paper are nontrivial rings with identity; homomorphisms need not be unitary, unless otherwise stated.
All chains considered are chains with respect to containment.
The symbol $\subset$ means proper inclusion and the symbol $\subseteq$ means inclusion
or equality. For a ring $R$ we denote the center of $R$ by $Z(R)$.

We start with some background on the notions we consider.

In his highly respected paper from 1937, in which Krull (see [Kr]) proved his basic theorems regarding the behavior of prime ideals under integral extensions, and defined the notions LO, GU, GD and INC, he proposed the following question: assuming $S \subseteq R$ are integral domains with $R$ integral over $S$ and $S$ integrally closed; do adjacent prime ideals
in $R$ contract to adjacent prime ideals in $S$? In 1972 Kaplansky (see [Ka]) answered the question negatively. In 1977, Ratliff (see [Ra]) made another step; he defined and studied the notion GB (going between): let $S \subseteq R$ be commutative rings. $S \subseteq R$ is said to satisfy GB if whenever $Q_1 \subset Q_2$ are prime ideals of $R$ and there exists a prime ideal $Q_1 \cap S \subset P' \subset Q_2 \cap S$, then there exists a prime ideal $Q' $ in $R$ such that
$Q_1 \subset Q' \subset Q_2$. Later, in 2003, G. Picavet (see [Pi])
introduced the notion SGB (strong going between), to be presented later.


In 2003, Kang and Oh (cf. [KO]) defined the notion of an SCLO extension: let $S \subseteq R$ be commutative rings. $S \subseteq R$ is called an
SCLO extension if, for every chain of prime ideals $\mathcal D$ of $S$ with an initial element $P$ and $Q$ a prime ideal of $R$ lying over $P$, there exists a chain of prime ideals $\mathcal C$ of $R$ lying over $\mathcal D$ whose initial element is $Q$. They proved that $S \subseteq R$ is
an SCLO extension iff it is a GU extension (cf. [KO, Corollary 12]). In particular, if $S \subseteq R$ is a GU extension, then for every chain of prime ideals $\mathcal D$ of $S$ there exists a chain of prime ideals of $R$ covering $\mathcal D$. We note that when considering a unitary homomorphism $f:S \rightarrow R$, some authors call
$f$ a chain morphism if every chain in $\text{Spec}(S)$ can be covered by a chain in $\text{Spec}(R)$. Also, some authors call an SCLO extension a GGU (generalized going up) extension. It follows that, in the commutative case, GU implies GGU for unitary ring homomorphisms. In 2005 Dobbs and Hetzel (cf. [DH]) proved that, in the commutative case,
GD implies GGD (generalized going down) for unitary ring homomorphisms. GGD is defined analogously to GGU.

We consider a more general setting. Namely, {\it In this paper $S$ and $R$ are rings (not necessarily commutative) and $f:S \rightarrow R$ is a homomorphism
(not necessarily unitary) such that $f[S] \subseteq Z(R)$.} We study the connections between chains of prime ideals of $S$ and chains of prime ideals of $R$.

For $I \vartriangleleft R$ and $J \vartriangleleft S$ we say that $I$ is lying over $J$ if $J=f^{-1}[I]$.
Note that if $Q$ is a prime ideal of $R$ then $f^{-1}[Q]$ is a prime ideal of $S$ or $f^{-1}[Q]=S$.
It is not difficult to see that $f$ is unitary if and only if for all $Q \in \text{Spec} (R)$, $f^{-1}[Q] \in \text{Spec} (S)$.
Indeed, $(\Rightarrow)$ is easy to check and as for $(\Leftarrow)$, if $f$ is not unitary then $f(1)$ is a central idempotent$\neq 1$. Thus, there exists a prime ideal
$Q \in \text {Spec} (R)$ containing $f(1)$; clearly, $f^{-1}[Q]=S$.

For the reader's convenience, we now define the five basic properties we consider.

 We say that $f$ satisfies LO (lying over) if for all $P \in \text {Spec} (S)$ there exists $Q \in \text {Spec}(R)$ lying over $P$.


 We say that $f$
satisfies GD (going down) if for any $P_{1} \subset
P_{2}$ in $\text{Spec} (S)$ and for every $Q_{2} \in \text{Spec} (R)$
lying over $P_{2}$, there exists $Q_{1} \subset Q_{2}$ in
$\text{Spec} (R)$ lying over $P_{1}$.

We say that $f$
satisfies GU (going up) if for any $P_{1} \subset
P_{2}$ in $\text{Spec} (S)$ and for every $Q_{1} \in \text{Spec} (R)$
lying over $P_{1}$, there exists $Q_{1} \subset Q_{2}$ in
$\text{Spec} (R)$ lying over $P_{2}$.

We say that $f$
satisfies SGB (strong going between) if for any $P_{1} \subset
P_{2} \subset
P_{3}$ in $\text{Spec} (S)$ and for every $Q_{1} \subset
Q_{3}$ in $\text{Spec} (R)$ such that $Q_{1}$ is lying over $P_{1}$ and
$Q_{3}$ is lying over $P_{3}$, there exists $Q_{1} \subset Q_{2} \subset Q_{3}$ in
$\text{Spec} (R)$ lying over $P_{2}$.

We note that the "standard" definition of INC is as follows: $f$ is said to satisfy INC (incomparability) if
whenever $Q_{1} \subset Q_{2}$ in $\text{Spec}(R)$, we have $f^{-1}[Q_{1}]
 \subset f^{-1}[Q_{2}] $. However, in order for us to be able to discuss non-unitary homomorphisms we make the following modification and
 define INC as follows: we say that $f$ satisfies INC if
whenever $Q_{1} \subset Q_{2}$ in $\text{Spec}(R)$ and $f^{-1}[Q_{2}] \neq S$, we have $f^{-1}[Q_{1}]
 \subset f^{-1}[Q_{2}] $.

We note that the case in which $R$ is an algebra over $S$ can be considered as a special case (of the case we consider). In particular, the case in which $S \subseteq R$ are rings and $S \subseteq Z(R)$ (and thus $S$ is a commutative ring) can be considered as a special case. Also, in this special case, one can show that, as in the commutative case, GU implies LO. However, in our case, GU does not necessarily imply LO. As an easy example, let $p \in \Bbb N$ be a prime number and let $2 \leq n \in \Bbb N$ such that $n$ is not a power of $p$. Let $f: \Bbb Z_{np} \rightarrow \Bbb Z_{p}$ be the homomorphism defined by $x$mod$(np)$ $\rightarrow x$mod$(p)$ . Then $f$ satisfies GU (in a trivial way, because k-dim $\Bbb Z_{np}=0$) but there is no prime ideal in $\Bbb Z_p$ lying over $q\Bbb Z_{np}$, where $p \neq q \in \Bbb N$ is any prime number dividing $n$.

\section{Maximal chains in Spec$(R)$ that cover maximal chains in Spec$(S)$}

In this section we prove that every maximal chain of
prime ideals of $R$ covers a maximal chain of prime ideals of $S$,
under the assumptions GU, GD and SGB. We also present an example from quasi-valuation theory.

We start with some basic definitions.

\begin{defn} Let $\mathcal D$ denote a set
of prime ideals of $S$. We say that a chain $\mathcal
C$ of prime ideals of $R$ is a {\it
$\mathcal D$-chain} if $f^{-1}[Q] \in \mathcal D$ for
every $Q \in \mathcal
C$.\end{defn}

\begin{defn} Let $\mathcal D \subseteq \text {Spec}(S)$ be a chain of prime ideals
and let $\mathcal C  \subseteq \text {Spec}(R)$ be a $\mathcal D$-chain. We say that $\mathcal C$ is a {\it cover} of
$\mathcal D$ (or that $\mathcal C$ covers $\mathcal D$) if for all $P \in \mathcal D$ there exists
$Q \in \mathcal C$ lying over $P$; i.e. the map $Q \rightarrow f^{-1}[Q]$ from $\mathcal C$ to $\mathcal D$ is surjective.
We say that $\mathcal C$ is a {\it perfect cover} of
$\mathcal D$ if $\mathcal C$ is a cover of $\mathcal D$ and the map $Q \rightarrow f^{-1}[Q]$ from $\mathcal C$ to $\mathcal D$ is injective. In other words, $\mathcal C$ is a perfect cover of
$\mathcal D$ if the map $Q \rightarrow f^{-1}[Q]$ from $\mathcal C$ to $\mathcal D$ is bijective. \end{defn}

The following lemma is well known.

\begin{lem} \label{alphar in Q implies one is in Q} Let $R$ be a ring. Let $\alpha \in Z(R)$, $r \in R$ and $Q \in \text{Spec}(R)$. If
$\alpha r \in Q$ then $\alpha \in Q$ or $r \in Q$.
\end{lem}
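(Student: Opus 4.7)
The plan is to reduce the statement to the standard noncommutative characterization of prime ideals, namely: for $a,b \in R$ and $Q \in \mathrm{Spec}(R)$, we have $aRb \subseteq Q$ implies $a \in Q$ or $b \in Q$. Given this, it suffices to show $\alpha R r \subseteq Q$.

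First I would fix an arbitrary $x \in R$ and note that, because $\alpha$ lies in the center of $R$, we may commute it past $x$ to get $\alpha x r = x \alpha r = x(\alpha r)$. Since $\alpha r \in Q$ by hypothesis and $Q$ is a two-sided ideal, the element $x(\alpha r)$ lies in $Q$. As $x$ was arbitrary, this gives $\alpha R r \subseteq Q$.

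Applying the primeness of $Q$ to the inclusion $\alpha R r \subseteq Q$ then yields $\alpha \in Q$ or $r \in Q$, which is the desired conclusion.

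The proof is essentially a one-line manipulation; the only subtle point is remembering that one must invoke the $aRb$-characterization of primeness (rather than the commutative $ab$-characterization, which need not hold here), and the centrality hypothesis on $\alpha$ is precisely what allows the element $\alpha x r$ to be rewritten as a left multiple of $\alpha r$. There is no real obstacle, which is why the authors label the statement as well known.
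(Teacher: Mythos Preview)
Your proof is correct and is essentially identical to the paper's: the paper writes the one-line argument $\alpha R r = R \alpha r \subseteq Q$, which is exactly your computation $\alpha x r = x(\alpha r) \in Q$ for arbitrary $x$, followed by the $aRb$-characterization of primeness. The only difference is that you spell out the intermediate step with an arbitrary $x$, whereas the paper compresses it into the equality of sets $\alpha R r = R \alpha r$.
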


\begin{proof} $\alpha R r= R \alpha r  \subseteq Q$. Thus $\alpha \in Q$ or $r \in Q$.

\end{proof}

We recall now the notion of a weak zero divisor, introduced in [BLM].
For a ring $R$, $a \in R$ is called a weak zero-divisor if there
are $r_1,r_2 \in R$ with $r_1ar_2 = 0$ and $r_1r_2 \neq 0$. In [BLM] it is shown that in any ring R,
the elements of a minimal prime ideal are weak zero-divisors. Explicitly, the following is proven (see [BLM, Theorem 2.2]).

\begin{lem} Let $R$ be a ring and let $Q \in \text{Spec}(R)$ denote a minimal prime of $R$.
Then for every $q \in Q$, $q$ is a weak zero divisor.

\end{lem}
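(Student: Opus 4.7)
The plan is to argue by contradiction: suppose some $q \in Q$ is not a weak zero divisor, so that $r_1 q r_2 = 0$ forces $r_1 r_2 = 0$ for every $r_1, r_2 \in R$. I will manufacture a prime ideal of $R$ strictly contained in $Q$, contradicting the minimality of $Q$. The machinery I would invoke is the classical correspondence between prime ideals of $R$ and maximal $m$-systems disjoint from $\{0\}$: recall that an $m$-system is a subset $M \subseteq R$ such that for all $a,b \in M$ there is $r \in R$ with $arb \in M$, and that the complement of a maximal such $m$-system avoiding $0$ is always a prime ideal.

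With this in mind, I would define
\[
M = \{\, r_1 q^{n_1} r_2 q^{n_2} \cdots r_k q^{n_k} r_{k+1} : k \geq 0,\; n_i \geq 1,\; r_i \in R,\; r_1 r_2 \cdots r_{k+1} \notin Q \,\}.
\]
Immediately $R \setminus Q \subseteq M$ (the case $k=0$), and $q \in M$ (take $k=1$, $r_1 = r_2 = 1$, $n_1 = 1$, noting $1 \notin Q$). To check that $M$ is an $m$-system, observe that for $a, b \in M$ with associated ``word products'' $\alpha, \beta \notin Q$, since $Q$ is prime the set $R \setminus Q$ is itself an $m$-system, so some $r \in R$ satisfies $\alpha r \beta \notin Q$; the concatenated word $arb$ then has word product $\alpha r \beta \notin Q$ and hence lies in $M$.

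The key step, where the hypothesis enters, is showing $0 \notin M$. I would proceed by induction on the total exponent $n_1 + \cdots + n_k$. If such a word equals $0$, pick any single occurrence of $q$ and split the word as $a \cdot q \cdot b = 0$; the non-weak-zero-divisor hypothesis gives $ab = 0$, which is the same word with one factor of $q$ removed, and the word product is unchanged. Iterating strips away all occurrences of $q$ and yields $r_1 r_2 \cdots r_{k+1} = 0 \in Q$, contradicting $r_1 \cdots r_{k+1} \notin Q$. Hence $0 \notin M$, and Zorn's lemma extends $M$ to a maximal $m$-system $M^*$ with $0 \notin M^*$; then $P := R \setminus M^*$ is a prime ideal of $R$ with $P \subseteq Q$ and $q \notin P$, so $P \subsetneq Q$, contradicting the minimality of $Q$.

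The main obstacle is the bookkeeping in the definition of $M$: it must simultaneously contain $R \setminus Q$, contain $q$, be closed under the $m$-system operation, and avoid $0$. The closure and the nonzero conditions pull in opposite directions, so the word formulation with side condition $r_1 \cdots r_{k+1} \notin Q$ is what reconciles them; one needs to verify carefully that splitting a word at an internal $q$ to apply the hypothesis does indeed return another element of $M$ with the same word product. Once this is in place, the contradiction with minimality of $Q$ is automatic.
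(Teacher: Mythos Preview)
Your argument is correct. The paper itself does not supply a proof of this lemma; it merely quotes the result from [BLM, Theorem 2.2]. So there is no ``paper's proof'' to compare against, and your write-up would in fact fill a gap by making the paper self-contained at this point.

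A couple of minor remarks on presentation. First, the phrase ``the classical correspondence between prime ideals of $R$ and maximal $m$-systems disjoint from $\{0\}$'' is slightly imprecise: maximal $m$-systems missing $0$ correspond to \emph{minimal} primes, not all primes. This does not affect your argument, since you only use that the complement $R\setminus M^{*}$ of a maximal $m$-system avoiding $0$ is prime (which follows by taking an ideal maximal with respect to disjointness from $M^{*}$ and observing it must equal $R\setminus M^{*}$). Second, in the induction step it is worth spelling out once that removing a single $q$ from a block $q^{n_i}$ either leaves $q^{n_i-1}$ with the same $r$-sequence (if $n_i\ge 2$) or merges $r_i$ and $r_{i+1}$ into $r_i r_{i+1}$ (if $n_i=1$); in both cases the word product $r_1\cdots r_{k+1}$ is unchanged, which is exactly what you assert. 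With these clarifications the proof is clean and, as far as the paper is concerned, an improvement over a bare citation.
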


It will be more convenient for us to use the following version of the previous lemma.

\begin{lem} \label{elements of a minimal prime ideal are weak zero-divisors}
Let $R$ be a ring, let $I \vartriangleleft R$ and let $Q \in \text{Spec}(R)$ denote a minimal prime ideal over $I$.
Then for every $q \in Q$ there exist $r_1,r_2 \in R$ such that $r_1qr_2 \in I$ and $r_1r_2 \notin I$.

\end{lem}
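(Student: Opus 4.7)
The plan is to deduce this relative version from the absolute version (the preceding lemma from [BLM]) by passing to the quotient ring $\bar R = R/I$. The idea is that ``being a weak zero-divisor modulo $I$'' is exactly the statement we need to prove, so after taking quotients the problem becomes precisely the hypothesis of the previous lemma.

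Concretely, I would proceed in three short steps. First, I would invoke the order-preserving bijection between prime ideals of $R$ containing $I$ and prime ideals of $\bar R$, under which $Q$ corresponds to $\bar Q := Q/I$. Since $Q$ is a minimal prime ideal over $I$, this correspondence shows that $\bar Q$ is a minimal prime of $\bar R$. Second, given $q \in Q$, set $\bar q = q + I \in \bar Q$ and apply the previous lemma to $\bar R$ and $\bar Q$: there exist $\bar{r_1}, \bar{r_2} \in \bar R$ with $\bar{r_1}\, \bar q\, \bar{r_2} = 0$ in $\bar R$ and $\bar{r_1}\, \bar{r_2} \neq 0$ in $\bar R$. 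Third, I would lift $\bar{r_1}, \bar{r_2}$ to representatives $r_1, r_2 \in R$; translating the two conditions back via the quotient map gives $r_1 q r_2 \in I$ and $r_1 r_2 \notin I$, which is exactly what the statement asserts.

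There is no substantive obstacle in this argument; the only point that requires a line of justification is the claim that minimality of $Q$ over $I$ in $R$ transfers to minimality of $Q/I$ in $R/I$, and this is immediate from the standard correspondence theorem together with the observation that containments among primes above $I$ are faithfully mirrored in $\bar R$. In short, the lemma is essentially a routine relativization of the BLM result, with the quotient construction doing all the work.
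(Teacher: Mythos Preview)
Your proposal is correct and is exactly the argument implicit in the paper: the paper does not write out a proof at all, merely introducing the lemma as a ``version of the previous lemma,'' which is precisely the relativization-by-quotient that you spell out. There is nothing to add.
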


We return now to our discussion. But before proving the next proposition, we note that $R$ is not necessarily commutative.
Therefore, a union of prime ideals of a chain in $\text{Spec}(R)$ is not necessarily a prime ideal of $R$.

\begin{prop} \label{minimal prime over union of chain intersection is the same} Let $\mathcal C \subseteq \text{Spec}(R)$ denote a chain of prime ideals of $R$. Let $\bigcup_{Q \in \mathcal C} Q \subseteq Q' \in \text{Spec}(R)$
denote a minimal prime ideal over $\bigcup_{Q \in \mathcal C} Q $. Then $f^{-1}[\bigcup_{Q \in \mathcal C} Q]=f^{-1}[Q']$.
In particular, if $f$ is unitary then $f^{-1}[\bigcup_{Q \in \mathcal C} Q]$ is a prime ideal of $S$.

\end{prop}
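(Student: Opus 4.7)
The plan is to show the nontrivial inclusion $f^{-1}[Q'] \subseteq f^{-1}\!\left[\bigcup_{Q \in \mathcal C} Q\right]$, since the reverse inclusion is immediate from $\bigcup_{Q \in \mathcal C} Q \subseteq Q'$. Write $I = \bigcup_{Q \in \mathcal C} Q$ for brevity; note that $I$ is an ideal of $R$ (union of a chain of ideals), though it need not be prime since $R$ is not assumed commutative.

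Take $s \in f^{-1}[Q']$, so that $f(s) \in Q'$. Since $Q'$ is a \emph{minimal} prime over $I$, Lemma \ref{elements of a minimal prime ideal are weak zero-divisors} applied to the element $f(s) \in Q'$ furnishes $r_1, r_2 \in R$ with $r_1 f(s) r_2 \in I$ and $r_1 r_2 \notin I$. The decisive point is that $f(s) \in Z(R)$, so $r_1 f(s) r_2 = f(s) r_1 r_2$, giving $f(s)(r_1 r_2) \in I$. Because $I$ is the union of a chain of prime ideals, there is some $Q_0 \in \mathcal C$ with $f(s)(r_1 r_2) \in Q_0$. Now invoke Lemma \ref{alphar in Q implies one is in Q} with $\alpha = f(s) \in Z(R)$: either $f(s) \in Q_0 \subseteq I$, or $r_1 r_2 \in Q_0 \subseteq I$; the latter contradicts the choice of $r_1, r_2$, so $f(s) \in I$, i.e.\ $s \in f^{-1}[I]$.

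For the "in particular" clause, when $f$ is unitary we have $f^{-1}[Q'] \in \operatorname{Spec}(S)$ by the observation in the introduction (unitary homomorphisms pull primes back to primes), and the equality just established identifies $f^{-1}[I]$ with this prime.

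The only subtle step is the use of the weak-zero-divisor lemma to convert membership in the minimal prime $Q'$ into a concrete algebraic relation inside $I$; everything else is a direct application of centrality of $f[S]$ together with the fact that a chain-union of primes, while not prime, does behave well under the test $\alpha \beta \in \bigcup Q_\lambda$ provided $\alpha$ is central.
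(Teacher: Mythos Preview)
Your proof is correct and follows essentially the same approach as the paper's: apply the weak-zero-divisor lemma (Lemma~\ref{elements of a minimal prime ideal are weak zero-divisors}) to $f(s)\in Q'$, use centrality of $f(s)$ to rewrite $r_1 f(s) r_2$ as $f(s) r_1 r_2$, pick a $Q_0\in\mathcal C$ containing it, and invoke Lemma~\ref{alphar in Q implies one is in Q}. The only cosmetic difference is that the paper frames the argument as a proof by contradiction (starting from $s\in f^{-1}[Q']\setminus f^{-1}[I]$), whereas you argue the inclusion directly; the substance is identical.
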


\begin{proof} Assume to the contrary that $f^{-1}[\bigcup_{Q \in \mathcal C} Q] \subset f^{-1}[Q']$ and let
$s \in f^{-1}[Q'] \setminus f^{-1}[\bigcup_{Q \in \mathcal C} Q]$. By Lemma \ref{elements of a minimal prime ideal are weak zero-divisors}, there exist $r_1,r_2 \in R$ such that $r_1f(s)r_2 \in \bigcup_{Q \in \mathcal C} Q$
and $r_1r_2 \notin \bigcup_{Q \in \mathcal C} Q$. However, $f(s) \in f[S] \subseteq Z(R)$ and thus $f(s)r_1r_2=r_1f(s)r_2 \in \bigcup_{Q \in \mathcal C} Q$.
So, there exists $Q \in \mathcal C$ such that $f(s)r_1r_2 \in Q$. Note that $f(s) \notin \bigcup_{Q \in \mathcal C} Q$; hence,
by Lemma \ref{alphar in Q implies one is in Q}, $r_1r_2 \in Q$. That is,
$r_1r_2 \in \bigcup_{Q \in \mathcal C} Q$, a contradiction.

\end{proof}

As shown, a union of prime ideals of a chain in $\text{Spec}(R)$ and a minimal prime ideal over it, are lying over the same prime ideal of $S$ (or over $S$).
Now, given an ideal $I$ of $R$ that is contained in an arbitrary prime ideal $Q_2$ of $R$,
we prove the existence of a minimal prime ideal over $I$, contained in $Q_2$.

\begin {lem} \label{there exists a minimal prime} Let $I$ be an ideal of $R$ and let $Q_{2}$ be any prime ideal of
$R$ containing $I$. Then there exists $I \subseteq Q' \subseteq Q_{2}$,
a minimal prime ideal over $I$. In particular, if $J$ is any ideal of $R$ then there exists a minimal prime ideal over $J$.
\end {lem}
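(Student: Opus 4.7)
The plan is a standard Zorn's lemma application, adapted to the noncommutative setting. Let $\Sigma = \{Q \in \text{Spec}(R) : I \subseteq Q \subseteq Q_2\}$, partially ordered by reverse inclusion. The set $\Sigma$ is nonempty since $Q_2 \in \Sigma$. To apply Zorn, one needs to show that every totally ordered subfamily $\{Q_i\}_{i \in \Lambda} \subseteq \Sigma$ admits an upper bound in $\Sigma$ (i.e.\ a lower bound under inclusion). The natural candidate is $Q^* := \bigcap_{i \in \Lambda} Q_i$, which clearly satisfies $I \subseteq Q^* \subseteq Q_2$, so the only real content is verifying that $Q^*$ is prime.

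For this verification I would argue directly from the noncommutative definition: suppose $aRb \subseteq Q^*$, but $a \notin Q^*$ and $b \notin Q^*$. Then there exist $i_1,i_2 \in \Lambda$ with $a \notin Q_{i_1}$ and $b \notin Q_{i_2}$. Since $\{Q_i\}$ is a chain, either $Q_{i_1} \subseteq Q_{i_2}$ or $Q_{i_2} \subseteq Q_{i_1}$. In the first case, $aRb \subseteq Q^* \subseteq Q_{i_1}$ combined with $a \notin Q_{i_1}$ and the primeness of $Q_{i_1}$ yields $b \in Q_{i_1} \subseteq Q_{i_2}$, contradicting $b \notin Q_{i_2}$. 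The second case is symmetric and gives $a \in Q_{i_1}$, a contradiction. Hence $Q^*$ is prime and lies in $\Sigma$.

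With the chain condition in hand, Zorn's lemma supplies a maximal element of $\Sigma$ under reverse inclusion; this is precisely a prime ideal $Q'$ with $I \subseteq Q' \subseteq Q_2$ that is minimal over $I$. For the ``in particular'' clause, given an arbitrary ideal $J$ of $R$, one first needs to produce some prime ideal containing $J$; if $J = R$ the claim is vacuous, and otherwise standard Zorn's lemma yields a maximal (hence prime) ideal $Q_2 \supseteq J$. Applying the first part with $I = J$ completes the argument.

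The only subtlety, and the step I expect readers to pause on, is the primeness of $Q^*$, since in the noncommutative world a descending intersection of primes need not be prime in general — but it is prime when the family is totally ordered, exactly as above. Everything else is a routine invocation of Zorn's lemma.
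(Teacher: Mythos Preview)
Your proof is correct and is essentially the same as the paper's: both invoke Zorn's Lemma, with the only substantive step being that the intersection of a chain of prime ideals is again prime (which you spell out explicitly via the $aRb$ argument, while the paper leaves it implicit). The paper phrases the Zorn application slightly differently---it takes a maximal \emph{chain} $\mathcal C'$ of primes between $I$ and $Q_2$ and then observes that $\bigcap_{Q\in\mathcal C'}Q$ is the desired minimal prime---but this is the same argument in different packaging.
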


\begin {proof} By Zorn's Lemma there exists a maximal chain
of prime ideals, say $\mathcal C'$, between $I$ and $Q_{2}$. So, $I \subseteq  \bigcap_{Q \in \mathcal C'} Q \subseteq Q_{2}$ is a minimal prime ideal over $I$. The last assertion is clear.

\end {proof}


We have proven the following: let $\mathcal C \subseteq \text{Spec}(R)$ denote a chain of prime ideals of $R$
and let $Q_2$ be a prime ideal of $R$ containing $\bigcup_{Q \in \mathcal C} Q$. Then there exists a prime ideal
$\bigcup_{Q \in \mathcal C} Q \subseteq Q' \subseteq Q_{2}$ such that $Q'$ is lying over $f^{-1}[\bigcup_{Q \in \mathcal C} Q]$, which is a prime ideal of $S$ or equals $S$.
One may ask whether this fact can be generalized as follows:
let $I$ be an ideal of $R$ lying over a prime ideal of $S$ and let $I \subseteq Q_{2}$ be any prime ideal of $R$.
Must there be a prime ideal $I \subseteq Q' \subseteq Q_{2}$ in $R$ such that $Q'$ is lying over the prime ideal $f^{-1}[I]$?
The answer to this question is: "no", even in the case where $S \subseteq R$ are commutative rings and $R$ is integral over $S$. As shown in the following example.

\begin{ex} \label{ex GU LO but no GD} Let $S \subseteq R$ be rings such that $R$ satisfies LO over $S$ but not GD (in our terminology, assume that the map $f: S \rightarrow R$, defined by $f(s)=s$ for all $s \in S$,  satisfies LO but not GD). Let $P_{1} \subset P_{2}$ be prime ideals of $S$ and let $Q_{2}$ be a prime ideal of $R$ lying over $P_{2}$ such that
there is no prime ideal $Q_{1} \subset Q_{2}$ lying over $P_{1}$. Let $Q_{3}$ be a prime ideal of $R$ lying over $P_{1}$. Denote $I= Q_{3} \cap Q_{2}$ and note that $I$ is clearly not a prime ideal of $R$; then, $I$ is an ideal of $R$ lying over the prime ideal $P_{1}$ and $I \subset Q_{2}$, but there is no prime ideal $I \subseteq Q' \subset Q_{2}$ of $R$ lying over $P_{1}$.
\end{ex}

The following two remarks are obvious.

\begin{rem} \label{order preserving} Let $\mathcal C$ denote a chain of prime ideals of $R$. Then the map
$Q \rightarrow f^{-1}[Q]$ from $\mathcal C$ to $\text {Spec}(S)$ is order preserving.

\end{rem}


\begin{rem} \label{mini inc} Assume that $f$ satisfies INC. Let $\mathcal C$ denote a chain of prime ideals of $R$ and let
$\mathcal C'=\{ Q \in \mathcal C \mid  f^{-1}[Q] \neq S\}$. Then the map
$Q \rightarrow f^{-1}[Q]$ from $\mathcal C'$ to $\text{Spec}(S)$ is injective.

\end{rem}



For the reader's convenience, in order to establish the following results in a clearer way, we briefly present here the notion of cuts.

Let $T$ denote a totally ordered set. A subset $X$ of $T$ is called {\it initial} (resp.
{\it final}) if for every $\gamma \in X$ and $\alpha \in T$, if $\alpha
\leq \gamma$ (resp. $\alpha \geq \gamma$), then $\alpha \in
X$. A cut $\mathcal A=(\mathcal A^{L}, \mathcal A^{R})$ of
$T$ is a partition of $T$ into two subsets $\mathcal A^{L}$ and $\mathcal
A^{R}$, such that, for every $\alpha \in \mathcal A^{L}$ and $\beta
\in \mathcal A^{R}$, $\alpha<\beta$. To define a cut, one often writes $\mathcal A^{L}=X$, meaning that
$\mathcal A$ is defined as $(X, T \setminus X)$ when $X$ is an initial
subset of $T$. For more information about cuts see, for example, [FKK] or [Weh].

\begin{lem} \label{QsbsetQ} Let $\mathcal C=\{X_{\alpha}\}_{\alpha \in I}$ be a chain of subsets
of $R$. Let $\mathcal D=\{f^{-1}[X_{\alpha } ] \}_{\alpha \in I}$
and let $Y \subseteq S$, $Y \notin \mathcal D$. Let $A=\{X \in
\mathcal C \mid f^{-1}[X] \subset Y\}$ and $B=\{X \in \mathcal C
\mid f^{-1}[X] \supset Y\}$. If $X \in A$ and $X' \in \mathcal C
\setminus A$ then $X \subset X'$. If $X  \in B$ and $X' \in
\mathcal C \setminus B$ then $X \supset X'$. In particular,
assuming $A,B \neq \emptyset$ and denoting $X_{1}=\bigcup_{X \in
A} X$ and $X_{2}=\bigcap_{X \in B} X$, one has $X_{1} \subseteq
X_{2}$.

\end{lem}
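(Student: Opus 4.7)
\medskip

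\noindent\textbf{Proof plan.} The entire lemma will follow from two simple observations: the preimage map $f^{-1}$ is inclusion-preserving (so a chain of subsets of $R$ is sent to a chain of subsets of $S$), and the hypothesis $Y \notin \mathcal D$ forbids any element of $\mathcal C$ from having preimage equal to $Y$. Accordingly, I would first handle the two bullet claims separately, and then derive the \emph{In particular} statement by combining them.

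For the first claim, fix $X \in A$ and $X' \in \mathcal C \setminus A$. Since $\mathcal C$ is a chain, either $X \subseteq X'$ or $X' \subseteq X$. Suppose the latter. Then $f^{-1}[X'] \subseteq f^{-1}[X] \subset Y$, so $f^{-1}[X'] \subset Y$, contradicting $X' \notin A$. Hence $X \subseteq X'$; if moreover $X = X'$ we reach the same contradiction, so in fact $X \subset X'$. The second claim is proved by the symmetric argument with $A$ replaced by $B$ and $\subset$ by $\supset$.

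For the \emph{In particular} part, note first that $A \cap B = \emptyset$: any $X \in A \cap B$ would satisfy $f^{-1}[X] \subset Y$ and $Y \subset f^{-1}[X]$ simultaneously, which is impossible. (It is here that the strict inclusions in the definitions of $A$ and $B$, together with the standing convention that $Y \notin \mathcal D$, play their role.) Now take any $X \in A$ and any $X' \in B$. Then $X' \in \mathcal C \setminus A$, so by the first claim $X \subset X'$. Thus every element of $A$ is contained in every element of $B$; taking the union over $A$ on the left and the intersection over $B$ on the right yields $X_1 = \bigcup_{X\in A} X \subseteq \bigcap_{X' \in B} X' = X_2$.

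There is no real obstacle here; the only point one must be attentive to is the strictness of the inclusions, and in particular the use of $Y \notin \mathcal D$ to guarantee that $A$ and $B$ are disjoint, which is what lets the two bullet claims combine into the final containment $X_1 \subseteq X_2$.
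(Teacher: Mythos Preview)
Your proof is correct and is exactly the paper's argument written out in detail: the paper merely observes that $A$ is an initial segment and $B$ a final segment of the chain $\mathcal C$, and declares the rest obvious. One minor inaccuracy in your commentary: the disjointness $A \cap B = \emptyset$ follows from the strict inclusions in the definitions of $A$ and $B$ alone (no set can properly contain and be properly contained in $Y$), so the hypothesis $Y \notin \mathcal D$ is not actually what guarantees it, and indeed that hypothesis plays no role in the argument as given.
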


\begin{proof} Clearly, $A$ is an initial subset of $\mathcal C$ and $B$ is a final subset of $\mathcal C$.
The assertions are now obvious.



\end{proof}

We shall freely use Lemma \ref{QsbsetQ} without reference.

We note that a maximal chain of prime ideals exists by Zorn's
Lemma and is nonempty, since every ring has a maximal ideal. Also
note that if $\mathcal C=\{Q_{\alpha}\}_{\alpha \in I}$ is a chain
of prime ideals of $R$ then $\mathcal D=\{f^{-1}[Q_{\alpha}]\}_{\alpha \in I} \subseteq \text {Spec}(S) \cup \{ S \}$ is a chain;
if $f$ is unitary then $\mathcal D$ is a chain of prime ideals of $S$. Our
immediate objective (to be reached in theorem \ref{D is a maximal chain}) is to prove that, under certain assumptions, if
$\mathcal C$ is a maximal chain in $\text {Spec}(R)$ then $\mathcal D$ is a
maximal chain in $\text {Spec}(S)$.

\begin{rem} \label{Q2 in C} Let $\mathcal C$ be a maximal
chain of prime ideals of $R$, let $A \neq \emptyset$ be an initial subset of $\mathcal C$ and let $B \neq \emptyset$ be a final subset of $\mathcal C$.
Then $\bigcap_{Q \in B} Q \in \mathcal C$. Moreover, if $\bigcup_{Q \in A} Q $ is a prime ideal of $R$ then $\bigcup_{Q \in A} Q \in \mathcal C$.

\end{rem}

\begin{proof} $\bigcap_{Q \in B} Q \in \text {Spec}(R)$ and $\mathcal C \cup \{ \bigcap_{Q \in B} Q \}$ is a chain; thus
$\bigcap_{Q \in B} Q \in \mathcal C$. In a similar way, if $\bigcup_{Q \in A} Q $ is prime then $\mathcal C \cup \{ \bigcup_{Q \in A} Q \}$ is a chain
of prime ideals in $\text {Spec}(R)$; thus
$\bigcup_{Q \in A} Q \in \mathcal C$.

\end{proof}

\begin{lem} \label{Q1 in A Q2 in B} Let $\mathcal C=\{Q_{\alpha}\}_{\alpha \in I}$ be a maximal chain of prime ideals of $R$,
$\mathcal D=\{f^{-1}[Q_{\alpha }]  \}_{\alpha \in I}$, $P \in
\text {Spec}(S) \setminus \mathcal D$, $A=\{Q \in \mathcal C \mid f^{-1}[Q]
\subset P\}$ and $B=\{Q \in \mathcal C \mid f^{-1}[Q] \supset P\}$.
Assume that $A,B \neq \emptyset$ and denote $Q_{1}=\bigcup_{Q \in A} Q$ and
$Q_{2}=\bigcap_{Q \in B} Q$. Then $Q_{2} \in B$ and $Q_{1} \in A$.

\end{lem}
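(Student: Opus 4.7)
The plan is to handle $Q_{2} \in B$ first (routine) and then $Q_{1} \in A$ (the harder half, where non-commutativity of $R$ forces more work).

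For $Q_{2}$: $B$ is a nonempty final subset of $\mathcal C$, so Remark \ref{Q2 in C} gives $Q_{2} \in \mathcal C$. Since $f^{-1}$ commutes with intersections, $f^{-1}[Q_{2}] = \bigcap_{Q \in B} f^{-1}[Q] \supseteq P$; this inclusion must be strict, because $f^{-1}[Q_{2}] \in \mathcal D$ whereas $P \notin \mathcal D$, so $f^{-1}[Q_{2}] \supset P$ and hence $Q_{2} \in B$.

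For $Q_{1}$: the main obstacle is that $R$ need not be commutative, so the ideal $Q_{1} = \bigcup_{Q \in A} Q$ is not a priori prime and the exact dual of Remark \ref{Q2 in C} is unavailable. I would sidestep this by introducing the auxiliary prime $Q_{3} := \bigcap_{Q \in \mathcal C \setminus A} Q$, which is well-defined because $B \subseteq \mathcal C \setminus A$ is a nonempty final subset, and which lies in $\mathcal C$ by Remark \ref{Q2 in C}. Lemma \ref{QsbsetQ} yields $Q_{1} \subseteq Q_{3}$, so by Lemma \ref{there exists a minimal prime} there is a minimal prime $Q'$ of $R$ over $Q_{1}$ with $Q_{1} \subseteq Q' \subseteq Q_{3}$. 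Every $Q \in A$ satisfies $Q \subseteq Q_{1} \subseteq Q'$, and every $Q \in \mathcal C \setminus A$ satisfies $Q' \subseteq Q_{3} \subseteq Q$, so $Q'$ is comparable with each element of $\mathcal C$ and maximality of $\mathcal C$ forces $Q' \in \mathcal C$. Now Proposition \ref{minimal prime over union of chain intersection is the same}, applied to the subchain $A$ together with its minimal prime $Q'$ over $\bigcup_{Q \in A} Q = Q_{1}$, gives $f^{-1}[Q'] = f^{-1}[Q_{1}] = \bigcup_{Q \in A} f^{-1}[Q] \subseteq P$. Because $Q' \in \mathcal C$ we have $f^{-1}[Q'] \in \mathcal D$, and $P \notin \mathcal D$ forces the inclusion to be strict, so $Q' \in A$. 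But then $Q' \subseteq Q_{1}$, which combined with $Q_{1} \subseteq Q'$ yields $Q_{1} = Q'$, and hence $Q_{1} \in A$ as required.
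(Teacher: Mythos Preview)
Your proof is correct and follows essentially the same approach as the paper: both introduce $Q_{3}=\bigcap_{Q\in\mathcal C\setminus A}Q$, use Proposition~\ref{minimal prime over union of chain intersection is the same} to show that a minimal prime over $Q_{1}$ lies over $f^{-1}[Q_{1}]\subset P$ and hence belongs to $A$, and deduce that $Q_{1}$ itself is prime and lies in $A$. The only difference is cosmetic: the paper argues by contradiction (assuming $Q_{1}$ is not prime and showing that $Q_{3}$ is then the minimal prime in question, forcing $Q_{3}\in A$), whereas you argue directly by invoking Lemma~\ref{there exists a minimal prime} to produce a minimal prime $Q'$ between $Q_{1}$ and $Q_{3}$ and showing $Q'=Q_{1}$.
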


\begin{proof}

We start by proving that $Q_{2} \in B$. First note that $f^{-1}[Q_{2}] \supseteq P$. By Remark \ref{Q2 in C}, $Q_{2} \in \mathcal C$.
Since $P \in \text {Spec}(S) \setminus \mathcal D$, one cannot have $f^{-1}[Q_{2}] = P$. Thus, $Q_{2} \in B$.

We shall prove now that $Q_{1}$ is a prime ideal of $R$. Assume to the contrary and let
$Q_{3}=\bigcap_{Q \in \mathcal C \setminus A} Q$. By Remark \ref{Q2 in C}, $Q_{3} \in \mathcal C$. Since $\mathcal C$ is a maximal chain
in $\text {Spec}(R)$, $Q_{3}$ is a minimal prime ideal over $Q_{1}$, strictly containing it.
Clearly $f^{-1}[Q_{1}] \subseteq P$ and by Proposition \ref{minimal prime over union of chain intersection is the same}, $f^{-1}[Q_{1}]=f^{-1}[Q_{3}]$. Since $P \in \text {Spec}(S) \setminus \mathcal D$, one cannot have $f^{-1}[Q_{3}] = P$.
Therefore, $f^{-1}[Q_{3}] \subset P$, i.e., $Q_{3} \in A$, a contradiction (to the fact that $Q_{3}$ strictly contains $Q_{1}$). So, $Q_{1}$ is a prime ideal of $R$. We conclude by Remark \ref{Q2 in C} that $Q_{1} \in \mathcal C$. Thus, $Q_{1} \in A$.

\end{proof}

Note that by the previous lemma, $Q_{1}$ is the greatest member of
$A$ and $Q_{2}$ is the smallest member of $B$.

\begin{thm} \label{D is a maximal chain} Assume that $f$ is unitary and satisfies GU, GD and SGB. Let $\mathcal C=\{Q_{\alpha}\}_{\alpha \in
I}$ be a maximal chain of prime ideals in $\text {Spec}(R)$. Then $\mathcal
D=\{f^{-1}[Q_{\alpha}]\}_{\alpha \in I}$ is a maximal chain of
prime ideals in $\text {Spec}(S)$. In other words, every maximal chain in $\text {Spec}(R)$ is a cover of some \textbf{maximal} chain in $\text {Spec}(S)$.

\end{thm}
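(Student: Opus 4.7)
Since $f$ is unitary, each $f^{-1}[Q_{\alpha}]$ is a prime ideal of $S$, so $\mathcal{D}$ really is a chain in $\text{Spec}(S)$. I would argue by contradiction: suppose $\mathcal{D}$ is not maximal and pick $P \in \text{Spec}(S)\setminus\mathcal{D}$ that is comparable with every element of $\mathcal{D}$. The goal is to produce a prime $Q' \in \text{Spec}(R)$ with $f^{-1}[Q']=P$ that is comparable with every member of $\mathcal{C}$; since $P \notin \mathcal{D}$ forces $Q' \notin \mathcal{C}$, the chain $\mathcal{C}\cup\{Q'\}$ would strictly enlarge $\mathcal{C}$, violating maximality.

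Partition $\mathcal{C}=A\sqcup B$ where $A=\{Q \in \mathcal{C} \mid f^{-1}[Q] \subset P\}$ and $B=\{Q \in \mathcal{C} \mid f^{-1}[Q] \supset P\}$ (this is a disjoint union because $P$ is comparable with every element of $\mathcal{D}$ and $P \notin \mathcal{D}$). The proof then splits into three cases. If $A=\emptyset$, then $B=\mathcal{C}$, so $\bigcap_{Q\in\mathcal{C}}Q$ is prime and lies in $\mathcal{C}$ by Remark \ref{Q2 in C}; its preimage strictly contains $P$, so GD delivers the required $Q'$ strictly below it. If $B=\emptyset$, then $\bigcup_{Q\in\mathcal{C}}Q$ is a proper ideal (it cannot contain $1$), hence sits inside some maximal ideal $M$ of $R$; since $M$ is comparable with every element of $\mathcal{C}$, the maximality of $\mathcal{C}$ forces $M \in \mathcal{C}$ and $M$ is the top of $\mathcal{C}$, and $f^{-1}[M]$ is strictly below $P$, so GU supplies $Q' \supset M$ lying over $P$.

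The essential case is $A,B\neq\emptyset$. Here Lemma \ref{Q1 in A Q2 in B} provides $Q_{1}:=\bigcup_{Q\in A}Q\in A$ and $Q_{2}:=\bigcap_{Q\in B}Q\in B$, both in $\mathcal{C}$, and $f^{-1}[Q_{1}]\subset P \subset f^{-1}[Q_{2}]$ with all inclusions strict (since $P\notin \mathcal{D}$). In particular $Q_{1}\subset Q_{2}$, and applying SGB to the triple $f^{-1}[Q_{1}]\subset P\subset f^{-1}[Q_{2}]$ yields $Q_{1}\subset Q'\subset Q_{2}$ in $\text{Spec}(R)$ lying over $P$, completing the contradiction.

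The main obstacle is not any individual case but arranging the sandwich in Case 3: one must know in advance that $\sup A$ and $\inf B$ are attained inside $\mathcal{C}$ as honest prime ideals whose preimages are strictly comparable to $P$, so that SGB becomes applicable. This is exactly what Lemma \ref{Q1 in A Q2 in B} supplies, which in turn leans on Proposition \ref{minimal prime over union of chain intersection is the same} to handle the fact that $\bigcup_{Q\in A}Q$ need not a priori be prime in the noncommutative setting. Once the sandwich is in place, the three hypotheses GD, GU, SGB each dispatch exactly one of the three cases.
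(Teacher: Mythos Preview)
Your proof is correct and follows essentially the same route as the paper's. The only cosmetic difference is organizational: the paper first argues (via GD and GU) that $\bigcap_{\alpha}Q_{\alpha}$ and $\bigcup_{\alpha}Q_{\alpha}$ lie over a minimal and a maximal prime of $S$ respectively, thereby ruling out $A=\emptyset$ and $B=\emptyset$ in advance, whereas you treat these as two separate cases inside the contradiction; the core case $A,B\neq\emptyset$ and the reliance on Lemma~\ref{Q1 in A Q2 in B} (and through it Proposition~\ref{minimal prime over union of chain intersection is the same}) are identical.
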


\begin{proof} First note that since $f$ is unitary, every prime ideal of $R$ is lying over some prime ideal of $S$.
Now, since $\mathcal C$ is a maximal chain in $\text {Spec}(R)$, $\bigcup_{\alpha \in I}
Q_{\alpha}$ is a maximal ideal of $R$ containing each $Q_{\alpha}
\in \mathcal C$ and thus $\bigcup_{\alpha \in I} Q_{\alpha} \in
\mathcal C$. In a similar way, since $\bigcap_{\alpha \in I}
Q_{\alpha}$ is a prime ideal of $R$ contained in each $Q_{\alpha}
\in \mathcal C$, $\bigcap_{\alpha \in I} Q_{\alpha} \in \mathcal
C$. We prove that $\bigcap_{\alpha \in I} Q_{\alpha}$ is lying
over a minimal prime ideal of $S$. Indeed, $\bigcap_{\alpha \in I}
Q_{\alpha}$ is lying over a prime ideal $P$ of $S$; assume to the
contrary that there exists a prime ideal $P_{0} \subset P$. Then
by GD, there exists $Q_{0} \subset \bigcap_{\alpha \in I}
Q_{\alpha}$ lying over $P_{0}$. Thus, $\mathcal C \cup \{ Q_{0}\}$
is chain of prime ideals strictly containing $\mathcal C$, a contradiction.
Similarly, we prove that $\bigcup_{\alpha \in I} Q_{\alpha}$ is
lying over a maximal ideal of $S$. Indeed, $\bigcup_{\alpha \in I}
Q_{\alpha}$ is lying over a prime ideal $P$ of $S$; 
assume to the contrary that there exists a prime ideal $P \subset P'$. Then by
GU, there exists $\bigcup_{\alpha \in I} Q_{\alpha} \subset Q'  $
lying over $P'$. Thus, $\mathcal C \cup \{ Q'\}$ is a chain of prime ideals strictly
containing $\mathcal C$, a contradiction.

We prove now that $\mathcal D=\{f^{-1}[Q_{\alpha}]\}_{\alpha \in
I}$ is a maximal chain of prime ideals of $S$. Assume to the
contrary that there exists $P \in \text {Spec}(S)$ such that $\mathcal D
\cup \{ P\}$ is a chain of prime ideals strictly containing
$\mathcal D$. Denote $A=\{Q \in \mathcal C \mid f^{-1}[Q] \subset
P\}$ and $B=\{Q \in \mathcal C \mid f^{-1}[Q] \supset P\}$. Note
that by the previous paragraph $P$ cannot be the greatest element
nor the smallest element in $\mathcal D \cup \{ P\}$; therefore
$A,B \neq \emptyset$. Denote $Q_{1}=\bigcup_{Q \in A} Q$ and
$Q_{2}=\bigcap_{Q \in B} Q$ and let $P_{1}=f^{-1}[Q_{1}]$ and
$P_{2}=f^{-1}[Q_{2}]$; by Lemma \ref{Q1 in A Q2 in B}, $Q_{1} \in A$ and $Q_{2} \in B$.
Hence, $P_{1} \subset P \subset P_{2}$. Therefore, by SGB, there exists a prime ideal
$Q_{1} \subset Q \subset Q_{2}$ lying over $P$. It is easy to see
that $\mathcal C$ is a disjoint union of $A$ and $B$ because
$\mathcal D \cup \{ P \}$ is a chain. Now, by the definition of
$Q_{1}$ and $Q_{2}$, we get $Q' \subseteq Q_{1} \subset Q \subset
Q_{2} \subseteq Q''$ for every $Q' \in A$ and $Q'' \in \mathcal C
\setminus A=B$. Thus, $\mathcal C \cup \{ Q\}$ is a chain strictly
containing $\mathcal C$, a contradiction to the maximality of
$\mathcal C$.



\end{proof}

It is easy to see that if $f$ is not unitary then Theorem \ref{D is a maximal chain} is not valid. As a trivial example, take $f$
as the zero map. Obviously, $f$ satisfies GU, GD and SGB, in a trivial way. Here is a less trivial example: let $R$ be a ring and consider the homomorphism $f: R \rightarrow R \oplus R$ sending each $r \in R$ to $(r,0) \in R \oplus R$. It is easy to see that $f$ satisfies GU, GD and SGB. Now, let $\{Q_{\alpha}\}_{\alpha \in
I}$ be a maximal chain of prime ideals in $\text {Spec}(R)$; then $\mathcal C=\{R \oplus Q_{\alpha}\}_{\alpha \in
I}$ is a maximal chain of prime ideals in $\text {Spec}(R \oplus R)$. However $ \{ f^{-1}[R \oplus Q_{\alpha}]\}_{\alpha \in
I} =\{ R \}$, which is clearly not a maximal chain of prime ideals of $R$.

\begin{cor} \label{one to one} Assume that $f$ is unitary and satisfies INC, GU, GD and SGB. Let $\mathcal C=\{Q_{\alpha}\}_{\alpha \in
I}$ be a maximal chain of prime ideals in $\text {Spec}(R)$. Then $\mathcal C$ is a perfect cover of
the \textbf {maximal} chain $\mathcal D=\{f^{-1}[Q_{\alpha}]\}_{\alpha \in I}$.


\end{cor}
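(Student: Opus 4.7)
The plan is to assemble the conclusion from two ingredients already in place: Theorem \ref{D is a maximal chain} for the maximality of $\mathcal D$, and Remark \ref{mini inc} together with the unitarity of $f$ for the injectivity of the contraction map.

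First I would invoke Theorem \ref{D is a maximal chain} directly: since $f$ is unitary and satisfies GU, GD and SGB, the chain $\mathcal D = \{f^{-1}[Q_\alpha]\}_{\alpha \in I}$ is a maximal chain in $\text{Spec}(S)$. This simultaneously gives that $\mathcal D \subseteq \text{Spec}(S)$ (so $\mathcal C$ is genuinely a $\mathcal D$-chain) and that $\mathcal D$ is maximal, which is the second assertion of the corollary.

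Next I would verify that the map $\varphi : \mathcal C \to \mathcal D$ defined by $\varphi(Q) = f^{-1}[Q]$ is bijective. Surjectivity is immediate: by the very definition of $\mathcal D$ every element of $\mathcal D$ has the form $f^{-1}[Q_\alpha]$ for some $Q_\alpha \in \mathcal C$, so $\mathcal C$ is a cover of $\mathcal D$. For injectivity I would appeal to Remark \ref{mini inc}: since $f$ is unitary, the observation in the introduction gives $f^{-1}[Q] \in \text{Spec}(S)$, and in particular $f^{-1}[Q] \neq S$, for every $Q \in \mathcal C$. Hence the set $\mathcal C'$ of Remark \ref{mini inc} coincides with all of $\mathcal C$, and the INC hypothesis then forces $\varphi$ to be injective on $\mathcal C$.

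Combining surjectivity and injectivity, $\varphi$ is a bijection from $\mathcal C$ onto $\mathcal D$, which by the definition of a perfect cover means $\mathcal C$ is a perfect cover of $\mathcal D$. There is no genuine obstacle here; the corollary is essentially a packaging of Theorem \ref{D is a maximal chain} and Remark \ref{mini inc}, and the only small point to be careful about is noting explicitly that unitarity rules out the exceptional case $f^{-1}[Q] = S$ so that INC yields strict inclusion on all of $\mathcal C$.
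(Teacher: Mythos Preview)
Your proof is correct and follows exactly the same approach as the paper, which simply cites Theorem \ref{D is a maximal chain} and Remark \ref{mini inc}. You have spelled out the details more fully, including the point that unitarity forces $\mathcal C' = \mathcal C$ in Remark \ref{mini inc}, but the underlying argument is identical.
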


\begin{proof} By Lemma \ref{mini inc} and Theorem \ref{D is a maximal chain}.
\end{proof}

\begin{ex} \label{example from qv theory} Suppose $F$ is a field with valuation $v$ and valuation ring $O_{v}$, $E/F$ is a finite dimensional field extension
and $R \subseteq E$ is a subring of $E$ lying over $O_{v}$. By [Sa1, Theorem 9.34] there exists a quasi-valuation $w$ on $RF$ extending the valuation $v$, with $R$ as its quasi-valuation ring; in this case $R$ satisfies LO (although we do not need this property here), INC, and GD over $O_{v}$ (see [Sa1, Theorem 9.38, 3]). By [Sa3, Theorem 3.7]
$R$ satisfies SGB over $O_{v}$. Moreover,
if there exists a quasi-valuation extending $v$, having a value group, and such that $R$ is the quasi-valuation ring,
then $R$ satisfies GU over $O_{v}$ (see [Sa1, Theorem 9.38, 6a]). Now, let $\mathcal
C=\{Q_{\alpha} \}_{\alpha \in I}$ be \textbf {any maximal
chain} of prime ideals of $R$. By Corollary \ref{one to one}, the map $Q
\rightarrow Q \cap O_{v}$ is a bijective order preserving
correspondence between $\mathcal C$ and \textbf {the maximal
chain} $\mathcal D=\{Q_{\alpha} \cap S\}_{\alpha \in I}$ of prime
ideals of $O_{v}$, namely, $\mathcal D=\text {Spec}(O_{v})$. In other words, any maximal chain in $\text {Spec}(R)$
covers the maximal chain $\text {Spec}(O_{v})$ in a one-to-one correspondence. In particular, for any chain in $\text {Spec}(O_{v})$
there exists a chain in $\text {Spec}(R)$ covering it, in a one-to-one correspondence.

\end{ex}

For more information on quasi-valuations see [Sa1], [Sa2] and [Sa3].

Let $S$ be a ring 
such that $\text {Spec}(S)$ is totally ordered by inclusion, let $R$ be a ring, and let $f:S \rightarrow R$ be a unitary homomorphism with $f[S] \subseteq Z(R)$.
In light of Theorem \ref{D is a maximal chain} and Example \ref{example from qv theory}, if $f$ satisfies GU, GD, and SGB, then $f$ satisfies LO. Compare this example with Lemma
\ref{GU + f^-1[0] contained in P implies LO} and Lemma \ref{GD + f[P]R neq R implies LO}.


\section {Maximal $\mathcal D$-chains}

We shall now study the subject from the opposite point of view: we
take $\mathcal D$, a chain of prime ideals of $S$ and
study $\mathcal D$-chains; in particular, maximal $\mathcal D$-chains.

we start with the definition of a maximal $\mathcal D$-chain.

\begin{defn} Let $\mathcal D$
be a chain of prime ideals of $S$ and let $\mathcal C$ be a $\mathcal D$-chain.
We say that $\mathcal C$ is a {\it maximal $\mathcal D$-chain} (not to be confused with a maximal chain) if
whenever $\mathcal C'$ is a chain of prime ideals of $R$ strictly containing $\mathcal C$ then
there exists $Q \in \mathcal C'$ such that $f^{-1}[Q]\notin \mathcal
D$. Namely, $\mathcal C$ is a $\mathcal D$-chain which is maximal with respect to containment.
\end{defn}

We shall now prove a basic lemma, the existence of maximal $\mathcal D$-chains.

\begin{lem} \label{there exists C maximal over D} $f$ satisfies LO if and only if for every nonempty chain
$\mathcal D \subseteq \text {Spec} (S)$, there exists a
nonempty maximal $\mathcal D$-chain.

\end{lem}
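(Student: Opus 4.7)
The statement has two directions, and I would handle them separately.

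For the easy direction $(\Leftarrow)$, I would simply specialize the hypothesis to singletons. Given $P \in \text{Spec}(S)$, apply the assumption to the nonempty chain $\mathcal D = \{P\}$ to obtain a nonempty maximal $\mathcal D$-chain $\mathcal C$. Any $Q \in \mathcal C$ satisfies $f^{-1}[Q] \in \mathcal D = \{P\}$, so $f^{-1}[Q] = P$ and $Q$ lies over $P$. Hence LO holds.

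For the forward direction $(\Rightarrow)$, I would produce a nonempty $\mathcal D$-chain to start with and then enlarge it by Zorn's lemma. Fix any $P \in \mathcal D$ (which exists since $\mathcal D$ is nonempty); by LO there exists $Q \in \text{Spec}(R)$ with $f^{-1}[Q] = P$, and then $\{Q\}$ is a nonempty $\mathcal D$-chain. Consider the poset $\mathcal{F}$ of all $\mathcal D$-chains in $\text{Spec}(R)$ containing $\{Q\}$, ordered by inclusion; it is nonempty because $\{Q\} \in \mathcal{F}$.

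The key step is verifying the chain condition for Zorn's lemma. Let $\{\mathcal C_\lambda\}_{\lambda \in \Lambda}$ be a totally ordered subcollection of $\mathcal{F}$, and set $\mathcal C^* = \bigcup_{\lambda} \mathcal C_\lambda$. Then $\mathcal C^*$ is totally ordered by inclusion (any two elements lie in a common $\mathcal C_\lambda$), consists of prime ideals of $R$, and every $Q' \in \mathcal C^*$ satisfies $f^{-1}[Q'] \in \mathcal D$ since $Q'$ belongs to some $\mathcal C_\lambda \in \mathcal{F}$; thus $\mathcal C^* \in \mathcal{F}$ and is an upper bound. By Zorn's lemma $\mathcal{F}$ has a maximal element $\mathcal C$, which is automatically a maximal $\mathcal D$-chain in the sense of the definition (any strictly larger chain of prime ideals of $R$ must contain some element whose preimage lies outside $\mathcal D$, else it would be a strictly larger member of $\mathcal{F}$). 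Since $Q \in \mathcal C$, $\mathcal C$ is nonempty.

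I expect no serious obstacle: the only mildly subtle point is remembering to seed the Zorn argument with an actual element (produced by LO) so that the resulting maximal $\mathcal D$-chain is guaranteed to be nonempty, and noting that being a maximal element of $\mathcal{F}$ is equivalent to being a maximal $\mathcal D$-chain because any $\mathcal D$-chain properly extending $\mathcal C$ would still lie in $\mathcal{F}$.
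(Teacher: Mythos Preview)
Your proof is correct and follows essentially the same approach as the paper: use LO to seed a nonempty $\mathcal D$-chain and then apply Zorn's Lemma to the poset of (nonempty) $\mathcal D$-chains, while the converse is immediate by specializing to singleton chains. The only cosmetic difference is that the paper applies Zorn to the family of all nonempty $\mathcal D$-chains rather than to those containing a fixed $\{Q\}$, and it leaves the verification of the chain condition and the $(\Leftarrow)$ direction as obvious, whereas you spell these out.
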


\begin{proof} $(\Rightarrow)$ Let $\mathcal D \subseteq \text {Spec} (S)$ be a nonempty chain, let $P \in
\mathcal D$,
and let $$\mathcal Z= \{ \mathcal E \subseteq \text {Spec} (R) \mid \mathcal E \text{ is a nonempty } \mathcal  D \text{ -chain} \}.$$ By LO there exists $Q \in \text {Spec} (R)$ such that $f^{-1}[Q]=P$;
hence $\{ Q\} \in \mathcal Z$. Therefore, $\mathcal Z \neq
\emptyset$. Now, $\mathcal Z$ with the partial order of
containment satisfies the conditions of Zorn's Lemma and thus
there exists $\mathcal C \in \mathcal Z$ maximal with
respect to containment.

$(\Leftarrow)$ It is obvious.

\end{proof}

Note that one can prove a similar version of $(\Rightarrow)$ of Lemma \ref{there exists C maximal over D} without the LO assumption. However, in this case a maximal $\mathcal D$-chain might be empty. Also note that similarly, one can prove that for any $\mathcal D$-chain $\mathcal C'$ there exists a maximal $\mathcal D$-chain $\mathcal C$ containing $\mathcal C'$.

Our immediate goal is to provide a preliminary connection between the properties LO, INC, GU, GD and SGB and maximal $\mathcal D$-chains. In order to do that, we define the following definition.

\begin{defn} \label{def_layers} Let $n \in \Bbb N$. We say that $f$ satisfies the {\it layer $n$ property} if for every
chain $\mathcal D \subseteq \text {Spec}(S)$ of cardinality $n$, every maximal $\mathcal D$-chain is of cardinality $n$.
\end{defn}

We shall prove that if $f$ satisfies layers $1$, $2$ and $3$ properties then for every
chain $\mathcal D \subseteq \text {Spec}(S)$ of arbitrary cardinality, every maximal $\mathcal D$-chain is of the same cardinality.

The following proposition is seen by an easy inspection. 

\begin{prop} \label{layers iff properties}

1. $f$ satisfies the layer $1$ property if and only if $f$ satisfies LO and INC.

2. $f$ satisfies layers $1$ and $2$ properties if and only if $f$ satisfies LO, INC, GU and GD.

3. $f$ satisfies layers $1$, $2$ and $3$ properties if and only if $f$ satisfies LO, INC, GU, GD and SGB.
\end{prop}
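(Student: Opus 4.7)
The plan is to prove each biconditional separately, bootstrapping so that parts $2$ and $3$ inherit the ``easy'' conclusions of the previous parts. Throughout, two observations do most of the work: INC makes $Q \mapsto f^{-1}[Q]$ injective on any $\mathcal D$-chain (Remark \ref{mini inc}), and this map is order-preserving (Remark \ref{order preserving}); additionally, the remark following Lemma \ref{there exists C maximal over D} shows that any $\mathcal D$-chain extends to a maximal one.

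For the $(\Rightarrow)$ direction of each part I would extract the named properties by choosing a tailored $\mathcal D$. To obtain LO from layer $1$, take $\mathcal D = \{P\}$: the empty chain cannot be a maximal $\mathcal D$-chain (cardinality $0 \neq 1$), so some $Q$ lies over $P$. For INC, a would-be counterexample $Q_1 \subset Q_2$ lying over the same $P \neq S$ extends to a maximal $\{P\}$-chain of cardinality $\geq 2$, contradicting layer $1$. For GU from layer $2$, given $Q_1$ over $P_1 \subset P_2$, extend $\{Q_1\}$ to a maximal $\{P_1,P_2\}$-chain $\mathcal C$ of cardinality $2$; INC forces the new element to contract to $P_2$ and order-preservation forces it to strictly contain $Q_1$. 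GD is dual, and SGB from layer $3$ is analogous: extend $\{Q_1,Q_3\}$ to a maximal $\{P_1,P_2,P_3\}$-chain of cardinality $3$ and read off the middle prime.

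For the $(\Leftarrow)$ direction, given a maximal $\mathcal D$-chain $\mathcal C$ over a $\mathcal D$ of cardinality $n \in \{1,2,3\}$, I would show $|\mathcal C| = n$ by an upper bound from INC (the map $Q \mapsto f^{-1}[Q]$ is injective from $\mathcal C$ into $\mathcal D$) and a lower bound by case analysis: if some $P_i \in \mathcal D$ is not hit by $\mathcal C$, produce a prime over $P_i$ in the correct chain position via LO, GU, GD, or SGB, contradicting maximality. The main obstacle will be the bookkeeping for $n=3$: one must enumerate all possible ``shapes'' of a maximal $\{P_1,P_2,P_3\}$-chain with fewer than three elements. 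The only truly new case is $|\mathcal C|=2$ with contractions $(P_1,P_3)$, where neither GU nor GD reaches $P_2$ and SGB is needed to insert a prime strictly between the two endpoints; the other two-element shapes reduce to GU or GD, singletons reduce to the two-element argument of part $2$, and the empty case reduces to LO.
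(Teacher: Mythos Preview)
Your proposal is correct and is exactly the kind of direct verification the paper has in mind: the paper offers no proof beyond the sentence ``The following proposition is seen by an easy inspection,'' and your argument supplies precisely that inspection, using the same ingredients (Remarks \ref{order preserving} and \ref{mini inc}, the extension remark after Lemma \ref{there exists C maximal over D}) that the paper has set up for this purpose. The case analysis you outline for the $(\Leftarrow)$ direction of part 3 is complete and the bookkeeping is accurate, including the observation that order-preservation rules out the ``wrong-order'' two-element contractions.
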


For each of the properties GD, GU and SGB we present now necessary and sufficient conditions in terms of maximal $\mathcal D$-chains.

\begin{prop} \label{mini GD} $f$ satisfies GD iff
for every nonempty chain $\mathcal D \subseteq \text {Spec}(S)$ and for every nonempty maximal $\mathcal D$-chain, $C \subseteq \text {Spec}(R)$,
the following holds: for every $P \in
\mathcal D$ there exists $Q \in \mathcal C$ such that $f^{-1}[Q]
\subseteq P$.


\end{prop}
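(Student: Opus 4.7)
The plan is to prove both implications directly, using the existence (noted after Lemma~\ref{there exists C maximal over D}) that any $\mathcal D$-chain extends to a maximal $\mathcal D$-chain, together with the standard fact that an intersection of a chain of prime ideals is a prime ideal (which holds even in the noncommutative setting: if $aRb \subseteq \bigcap_{Q \in \mathcal C} Q$ and $a,b$ avoid this intersection, pick $Q_1,Q_2 \in \mathcal C$ with $a\notin Q_1$, $b \notin Q_2$, and WLOG $Q_1 \subseteq Q_2$; then $aRb\subseteq Q_2$ prime forces $a\in Q_2$ or $b\in Q_2$, a contradiction).

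For $(\Rightarrow)$, assume GD and fix a nonempty maximal $\mathcal D$-chain $\mathcal C$ and $P \in \mathcal D$. I would argue by contradiction, supposing that $f^{-1}[Q] \not\subseteq P$ for every $Q\in\mathcal C$. Since $\mathcal D$ is a chain and each $f^{-1}[Q] \in \mathcal D$, comparability forces $P \subsetneq f^{-1}[Q]$ for every $Q\in\mathcal C$. Set $Q^{*}=\bigcap_{Q\in\mathcal C}Q$, a prime ideal by the remark above, and note $f^{-1}[Q^{*}] = \bigcap_{Q\in\mathcal C} f^{-1}[Q] \supseteq P$. If $f^{-1}[Q^{*}] = P$, then either $Q^{*}\in\mathcal C$ (contradicting the hypothesis that no member of $\mathcal C$ lies over something contained in $P$) or $\mathcal C\cup\{Q^{*}\}$ is a strictly larger $\mathcal D$-chain, contradicting maximality. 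Otherwise $P \subsetneq f^{-1}[Q^{*}]$, so GD applied to $P \subsetneq f^{-1}[Q^{*}]$ and $Q^{*}$ produces $Q' \subsetneq Q^{*}$ with $f^{-1}[Q'] = P$; since $Q'\subsetneq Q^{*} \subseteq Q$ for every $Q\in\mathcal C$, the set $\mathcal C\cup\{Q'\}$ is a strictly larger $\mathcal D$-chain, again contradicting maximality.

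For $(\Leftarrow)$, take $P_{1}\subsetneq P_{2}$ in $\text{Spec}(S)$ and $Q_{2}$ lying over $P_{2}$. Set $\mathcal D=\{P_{1},P_{2}\}$; the $\mathcal D$-chain $\{Q_{2}\}$ extends to a maximal $\mathcal D$-chain $\mathcal C$. Applying the hypothesis with $P = P_{1}$, I obtain $Q\in\mathcal C$ with $f^{-1}[Q]\subseteq P_{1}$. Since $f^{-1}[Q]\in\{P_{1},P_{2}\}$ and $P_{2}\nsubseteq P_{1}$, necessarily $f^{-1}[Q]=P_{1}$. Because $\mathcal C$ is a chain, $Q$ and $Q_{2}$ are comparable; and since their contractions are distinct, they are unequal. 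The containment $Q\supseteq Q_{2}$ would give $P_{1}=f^{-1}[Q]\supseteq P_{2}$, contradicting $P_{1}\subsetneq P_{2}$, so $Q\subsetneq Q_{2}$ and $Q$ serves as the desired $Q_{1}$.

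The main obstacle is the forward direction: I must guarantee that the prime ideal produced by GD can actually be adjoined to $\mathcal C$ while preserving the chain property. The device handling this is to place the GD application at the bottom of $\mathcal C$, namely at $Q^{*}=\bigcap_{Q\in\mathcal C}Q$, so that the new prime $Q'$ automatically sits below every member of $\mathcal C$. The case split on whether $f^{-1}[Q^{*}]$ already equals $P$ or strictly contains $P$ is routine once one commits to this anchor.
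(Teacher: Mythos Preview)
Your proof is correct and follows essentially the same approach as the paper: in the forward direction you anchor at $Q^{*}=\bigcap_{Q\in\mathcal C}Q$, split on whether $f^{-1}[Q^{*}]=P$ or $P\subsetneq f^{-1}[Q^{*}]$, and invoke GD in the latter case to produce a prime below the whole chain; in the backward direction you take $\mathcal D=\{P_1,P_2\}$ and extend $\{Q_2\}$ to a maximal $\mathcal D$-chain. The only minor differences are cosmetic: you make the subcase $Q^{*}\in\mathcal C$ explicit (the paper leaves it implicit, since the contradiction hypothesis already rules it out), and your $(\Leftarrow)$ is phrased directly rather than by contradiction, but the content is the same.
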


\begin{proof} $(\Rightarrow)$ Let $\mathcal D \subseteq \text {Spec}(S)$ be a nonempty chain, let 
$C \subseteq \text {Spec}(R)$ be a nonempty maximal $\mathcal D$-chain, and let $P \in \mathcal
D$. Note that $f^{-1}[Q] \in D$ for every $Q \in \mathcal C$ and
since $\mathcal D$ is a chain, for every $Q \in \mathcal C$ we
have $f^{-1}[Q] \subseteq P$ or $P \subseteq f^{-1}[Q]$. Assume to
the contrary that $P \subset f^{-1}[Q]$ for all $Q \in \mathcal C$.
Thus, $$P \subseteq \bigcap_{Q \in \mathcal C} (f^{-1}[Q])=f^{-1}[\bigcap_{Q \in \mathcal C} Q].$$ Obviously, $\bigcap_{Q
\in \mathcal C} Q \subseteq Q$ for every $Q \in \mathcal C$ and
$\bigcap_{Q \in \mathcal C} Q \in \text {Spec}(R)$ (note that
$\mathcal C$ is not empty). Therefore, it is impossible that $P = f^{-1}[\bigcap_{Q \in \mathcal C} Q]$, since then
$\mathcal C \cup \{ \bigcap_{Q \in \mathcal C} Q \}$ is a $\mathcal D$-chain strictly containing $\mathcal C$.

Hence, we may assume that $P \subset f^{-1}[\bigcap_{Q \in \mathcal C} Q]$. Now, $\bigcap_{Q \in \mathcal C} Q$ is a prime ideal of $R$ lying over $f^{-1}[\bigcap_{Q \in \mathcal C} Q] \in \text {Spec}(S)$ (note that $f^{-1}[\bigcap_{Q \in \mathcal C} Q] \subseteq f^{-1}[Q]$ for all $Q \in \mathcal C$ and $\mathcal C$ is a nonempty $\mathcal D$-chain; thus $f^{-1}[\bigcap_{Q \in \mathcal C} Q] \neq S$). Therefore, by GD there exists a prime ideal $Q' \subset \bigcap_{Q \in \mathcal C} Q$ that is lying over $P$. We then get a $\mathcal D$-chain $\mathcal C \cup
\{ Q' \} $ that strictly contains $\mathcal C$, which is again
impossible.

$(\Leftarrow)$ Let $P_{1} \subset P_{2} \in \text {Spec}(S)$ and let $Q_{2} \in \text {Spec}(R)$ lying over $P_{2}$.
Assume to the contrary that there is no prime ideal $Q_{1} \subset Q_{2}$ lying over $P_{1}$. 
Let $\mathcal C$ denote a maximal $\{ P_{1}, P_{2}\}$-chain containing $Q_{2}$.
Hence, for all $Q \in \mathcal C$, $P_{1} \subset f^{-1}[Q](=P_{2})$, a contradiction.

\end{proof}

We shall now prove the dual of Proposition \ref{mini GD}. 

\begin{prop} \label{mini GU} $f$ satisfies GU iff
for every nonempty chain $\mathcal D \subseteq \text {Spec}(S)$ and for every nonempty maximal $\mathcal D$-chain, $C \subseteq \text {Spec}(R)$,
the following holds: for every $P \in
\mathcal D$ there exists $Q \in \mathcal C$ such that $P
\subseteq f^{-1}[Q]$.


\end{prop}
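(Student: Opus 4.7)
The plan is to mirror the proof of Proposition \ref{mini GD} (the dual statement for GD), replacing intersections with unions and swapping the direction of the inclusions throughout. The forward direction will use GU to enlarge a maximal $\mathcal D$-chain upward and force a contradiction, while the reverse direction will apply the maximal chain characterization to a simple two-element chain $\{P_1,P_2\}$.

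For $(\Rightarrow)$, suppose $f$ satisfies GU, let $\mathcal D\subseteq \text{Spec}(S)$ be a nonempty chain, let $\mathcal C\subseteq \text{Spec}(R)$ be a nonempty maximal $\mathcal D$-chain, and let $P\in\mathcal D$. Since $\mathcal D$ is a chain and $f^{-1}[Q]\in\mathcal D$ for all $Q\in\mathcal C$, assume for contradiction that $f^{-1}[Q]\subsetneq P$ for every $Q\in\mathcal C$. The natural upward object to consider is $U=\bigcup_{Q\in\mathcal C}Q$, but this need not be prime, so I would invoke Lemma \ref{there exists a minimal prime} applied (inside some prime containing $U$, whose existence is standard) to obtain a minimal prime $Q'$ over $U$. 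By Proposition \ref{minimal prime over union of chain intersection is the same}, $f^{-1}[Q']=f^{-1}[U]=\bigcup_{Q\in\mathcal C}f^{-1}[Q]\subseteq P$, and in particular $f^{-1}[Q']\neq S$, so $f^{-1}[Q']\in\text{Spec}(S)$.

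Now split into two cases. If $f^{-1}[Q']=P$, then $Q'\notin\mathcal C$ (since $f^{-1}[Q]\subsetneq P$ for every $Q\in\mathcal C$), while $Q'\supseteq Q$ for every $Q\in\mathcal C$; hence $\mathcal C\cup\{Q'\}$ is a strictly larger $\mathcal D$-chain, contradicting maximality. If instead $f^{-1}[Q']\subsetneq P$, then applying GU to $f^{-1}[Q']\subset P$ and $Q'$ yields $Q''\supset Q'$ lying over $P$, and $\mathcal C\cup\{Q''\}$ is again a strictly larger $\mathcal D$-chain, a contradiction.

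For $(\Leftarrow)$, given $P_1\subset P_2$ in $\text{Spec}(S)$ and $Q_1\in\text{Spec}(R)$ lying over $P_1$, I would extend $\{Q_1\}$ to a maximal $\{P_1,P_2\}$-chain $\mathcal C$ (using the remark following Lemma \ref{there exists C maximal over D}). By hypothesis applied to $P=P_2$, there exists $Q\in\mathcal C$ with $P_2\subseteq f^{-1}[Q]$, so $f^{-1}[Q]=P_2$. Since $\mathcal C$ is a chain, $Q$ and $Q_1$ are comparable; $Q\subseteq Q_1$ would force $P_2=f^{-1}[Q]\subseteq f^{-1}[Q_1]=P_1$, contradicting $P_1\subsetneq P_2$. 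Hence $Q\supset Q_1$ is a prime lying over $P_2$, as needed.

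The only subtle step is the one requiring handling of a non-prime union: the intersection in the GD proof was automatically prime, but here I must pass to a minimal prime above $U$ and then rely on Proposition \ref{minimal prime over union of chain intersection is the same} to see that the contraction is unchanged. Once that machinery is invoked, the rest of the argument is a routine dualization.
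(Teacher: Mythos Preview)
Your proposal is correct and follows essentially the same approach as the paper: for $(\Rightarrow)$ you pass to a minimal prime $Q'$ over $\bigcup_{Q\in\mathcal C}Q$ via Lemma~\ref{there exists a minimal prime}, use Proposition~\ref{minimal prime over union of chain intersection is the same} to control its contraction, and then split into the two cases $f^{-1}[Q']=P$ and $f^{-1}[Q']\subsetneq P$ exactly as the paper does. For $(\Leftarrow)$ the paper simply says the argument is dual to that of Proposition~\ref{mini GD}, and your explicit version (extending $\{Q_1\}$ to a maximal $\{P_1,P_2\}$-chain and locating $Q$ lying over $P_2$) is precisely that dualization.
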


\begin{proof} $(\Rightarrow)$ 
Let $\mathcal D \subseteq \text {Spec}(S)$ be a nonempty chain and let
$C \subseteq \text {Spec}(R)$ be a nonempty maximal $\mathcal D$-chain.
Let $P \in \mathcal
D$ and assume to the contrary that $P \supset f^{-1}[Q] $ for all $Q \in \mathcal
C$. Then,
$$ P \supseteq \bigcup_{Q \in \mathcal C} (f^{-1}[Q])=f^{-1}[\bigcup_{Q
\in \mathcal C} Q].$$ 
Note that $Q \subseteq \bigcup_{Q
\in \mathcal C} Q$ for every $Q \in \mathcal C$ (although $\bigcup_{Q
\in \mathcal C} Q$ is not necessarily a prime ideal of $R$). Now, by Lemma \ref{there exists a minimal prime},
there exists a minimal prime ideal $Q'$ over $\bigcup_{Q
\in \mathcal C} Q$ and by Proposition \ref{minimal prime over union of chain intersection is the same}, $f^{-1}[ \bigcup_{Q \in \mathcal C} Q])=f^{-1}[Q']$, which is a prime ideal of $S$ (note that $f^{-1}[ \bigcup_{Q \in \mathcal C} Q] \subseteq P$, so one cannot have $f^{-1}[ \bigcup_{Q \in \mathcal C} Q])=S$).


Now, if $ P = f^{-1}[ \bigcup_{Q \in \mathcal C} Q])$ then $\mathcal C \cup \{ Q' \}$ is a $\mathcal D$-chain strictly containing $\mathcal C$, a contradiction. If $f^{-1}[ \bigcup_{Q \in \mathcal C} Q]) \subset P$ then by GU, there
exists a prime ideal of $R$, $Q' \subset Q''$ lying over $P$. So,
we get a $\mathcal D$-chain, $\mathcal C \cup \{Q''\}$ strictly
containing $\mathcal C$, which is again impossible.

$(\Leftarrow)$ The proof is almost identical to the proof of $(\Leftarrow)$ in Proposition \ref{mini GD}.

\end{proof}


\begin{prop} \label{mini SGB} $f$ satisfies SGB iff
for every nonempty chain $\mathcal D \subseteq \text {Spec}(S)$, for every nonempty maximal $\mathcal D$-chain, $C \subseteq \text {Spec}(R)$,
and for every cut $\mathcal A$ of
$\mathcal C$ such that $\mathcal A \neq (\emptyset, \mathcal C)$ and $\mathcal A \neq
(\mathcal C,\emptyset )$ (thus $|\mathcal C| \geq 2$), the following holds:
for every $P \in
\mathcal D$ there exists $Q_{1} \in \mathcal A^{L}$ such that $P
\subseteq f^{-1}[Q_{1}]$ or there exists $Q_{2} \in \mathcal A^{R}$ such
that $f^{-1}[Q_{2}] \subseteq P$.


\end{prop}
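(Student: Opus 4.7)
The plan is to mirror the structure of the proofs of Propositions \ref{mini GD} and \ref{mini GU}, using maximality of the $\mathcal D$-chain to produce a $\mathcal D$-chain strictly containing it whenever the conclusion fails, while being careful about noncommutativity of $R$.

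For the $(\Rightarrow)$ direction, assume $f$ satisfies SGB and fix $\mathcal D$, a nonempty maximal $\mathcal D$-chain $\mathcal C$, a nontrivial cut $\mathcal A=(\mathcal A^L,\mathcal A^R)$, and $P\in\mathcal D$. Suppose for contradiction that $f^{-1}[Q]\subsetneq P$ for every $Q\in\mathcal A^L$ and $P\subsetneq f^{-1}[Q']$ for every $Q'\in\mathcal A^R$ (strict inclusions because $\mathcal D$ is totally ordered and $P\ne f^{-1}[Q]$ for the primes in question). Set $Q_L=\bigcup_{Q\in\mathcal A^L}Q$ and $Q_R=\bigcap_{Q'\in\mathcal A^R}Q'$. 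Then $Q_R$ is prime (intersection of a chain of primes), $Q_L\subseteq Q_R$ by Lemma \ref{QsbsetQ}, and Lemma \ref{there exists a minimal prime} furnishes a minimal prime $Q^{*}$ over $Q_L$ with $Q^{*}\subseteq Q_R$. By Proposition \ref{minimal prime over union of chain intersection is the same}, $f^{-1}[Q^{*}]=f^{-1}[Q_L]=\bigcup_{Q\in\mathcal A^L}f^{-1}[Q]\subseteq P$. If equality holds, $Q^{*}$ lies over $P$ and, since $f^{-1}[Q^{*}]=P$ differs from the $f^{-1}$-value of each element of $\mathcal A^L\cup\mathcal A^R$, adding $Q^{*}$ to $\mathcal C$ yields a $\mathcal D$-chain strictly containing $\mathcal C$, contradicting maximality. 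Otherwise $f^{-1}[Q^{*}]\subsetneq P\subsetneq f^{-1}[Q_R]$ and $Q^{*}\subsetneq Q_R$ (strict, since the $f^{-1}$-images differ), so SGB produces a prime $Q^{**}$ with $Q^{*}\subsetneq Q^{**}\subsetneq Q_R$ lying over $P$; inserting $Q^{**}$ into $\mathcal C$ again violates maximality.

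For the $(\Leftarrow)$ direction, let $P_1\subsetneq P_2\subsetneq P_3$ in $\text{Spec}(S)$ and $Q_1\subsetneq Q_3$ in $\text{Spec}(R)$ with $f^{-1}[Q_i]=P_i$ for $i=1,3$. Set $\mathcal D=\{P_1,P_2,P_3\}$ and extend $\{Q_1,Q_3\}$ to a maximal $\mathcal D$-chain $\mathcal C$ (using the remark after Lemma \ref{there exists C maximal over D}). If some $Q\in\mathcal C$ satisfies $f^{-1}[Q]=P_2$, then Remark \ref{order preserving} forces $Q_1\subsetneq Q\subsetneq Q_3$ and we are done. Otherwise $\mathcal C=\mathcal A^L\sqcup\mathcal A^R$ with $\mathcal A^L=\{Q\in\mathcal C:f^{-1}[Q]=P_1\}\ni Q_1$ and $\mathcal A^R=\{Q\in\mathcal C:f^{-1}[Q]=P_3\}\ni Q_3$, which is easily checked to be a nontrivial cut. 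Applying the hypothesis with $P=P_2$ gives either $P_2\subseteq f^{-1}[Q]=P_1$ for some $Q\in\mathcal A^L$, or $P_3=f^{-1}[Q']\subseteq P_2$ for some $Q'\in\mathcal A^R$; both are absurd.

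The main obstacle is precisely the noncommutativity of $R$: the union $Q_L=\bigcup_{Q\in\mathcal A^L}Q$ need not be prime, so one cannot simply insert it into $\mathcal C$. This is circumvented by combining Lemma \ref{there exists a minimal prime} (to produce a minimal prime $Q^{*}$ over $Q_L$ still sitting below $Q_R$) with Proposition \ref{minimal prime over union of chain intersection is the same} (to guarantee $f^{-1}[Q^{*}]=f^{-1}[Q_L]$, so that the new prime still has an $f^{-1}$-image in $\mathcal D$ when we apply SGB). The bookkeeping to verify that the prime produced is strictly between every element of $\mathcal A^L$ and every element of $\mathcal A^R$ — and thus genuinely enlarges $\mathcal C$ — is the only delicate point, but it follows at once from the strict inequalities of $f^{-1}$-values together with Remark \ref{order preserving}.
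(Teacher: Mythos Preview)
Your argument is correct and follows essentially the same route as the paper's proof: in $(\Rightarrow)$ you pass to $Q_L=\bigcup_{\mathcal A^L}Q$ and $Q_R=\bigcap_{\mathcal A^R}Q$, replace $Q_L$ by a minimal prime $Q^{*}$ via Lemma~\ref{there exists a minimal prime} and Proposition~\ref{minimal prime over union of chain intersection is the same}, and then invoke SGB; in $(\Leftarrow)$ you build the same maximal $\{P_1,P_2,P_3\}$-chain and the same cut. One small omission: in the ``Otherwise'' branch you assert $P\subsetneq f^{-1}[Q_R]$ without justification, and the intersection of primes strictly containing $P$ could a priori equal $P$; the paper fills this by observing that $\mathcal C\cup\{Q_R\}$ is a chain, so maximality of $\mathcal C$ as a $\mathcal D$-chain forces $Q_R$ not to lie over $P$ (the same maneuver you already used for $Q^{*}$).
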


\begin{proof} $(\Rightarrow)$ Let $\mathcal D \subseteq \text {Spec}(S)$ be a nonempty chain, let
$C \subseteq \text {Spec}(R)$ be a nonempty maximal $\mathcal D$-chain,
and let $\mathcal A$ be a cut of
$\mathcal C$ such that $\mathcal A \neq (\emptyset, \mathcal C)$ and $\mathcal A \neq
(\mathcal C,\emptyset )$.
Assume to the contrary that there exists $P \in \mathcal
D$ such that $$f^{-1}[ Q_{l}] \subset P \subset f^{-1}[ Q_{r}]$$ for
all $Q_{l} \in \mathcal A^{L}$ and $Q_{r} \in \mathcal A^{R}$. Thus,
$$f^{-1}[\bigcup_{Q \in \mathcal A^{L}} Q]=\bigcup_{Q \in \mathcal A^{L}} f^{-1}[Q] \subseteq P \subseteq
\bigcap_{Q \in \mathcal A^{R}} f^{-1}[Q]=f^{-1}[\bigcap_{Q \in \mathcal A^{R}} Q].$$
Now, $\bigcap_{Q \in \mathcal A^{R}} Q \in \text {Spec}(R)$ and $\mathcal C \cup \{ \bigcap_{Q \in \mathcal A^{R}} Q \}$
is a chain of prime ideals of $R$. Thus, since $\mathcal C$ is a maximal $\mathcal D$-chain, $\bigcap_{Q \in \mathcal A^{R}} Q$ is not lying over $P$. Hence,
$$P \subset f^{-1}[\bigcap_{Q \in \mathcal A^{R}} Q)].$$ 

On the other hand, $\bigcup_{Q \in \mathcal A^{L}} Q$ is an ideal of $R$ lying over $f^{-1}[ \bigcup_{Q \in \mathcal A^{L}} Q] $ 
 and is contained in the prime ideal $\bigcap_{Q \in \mathcal A^{R}} Q$.
Thus, by Lemma \ref{there exists a minimal prime}, there exists a minimal prime ideal $\bigcup_{Q \in \mathcal A^{L}} Q \subseteq Q' \subseteq \bigcap_{Q \in \mathcal A^{R}} Q$ over $\bigcup_{Q \in \mathcal A^{L}} Q$, and by Proposition \ref{minimal prime over union of chain intersection is the same}, $Q'$ is lying over $f^{-1}[ \bigcup_{Q \in \mathcal A^{L}} Q] \in \text {Spec}(S)$ (note that, as in Proposition \ref{mini GU}, $f^{-1}[ \bigcup_{Q \in \mathcal A^{L}} Q] \subseteq P$, so one cannot have $f^{-1}[ \bigcup_{Q \in \mathcal C} Q])=S$). Now, since $\mathcal C \cup \{ Q' \}$ is a chain of prime ideals and $\mathcal C$ is a maximal $\mathcal D$-chain,
one cannot have $Q'$ lying over $P$. Thus, $$ f^{-1}[\bigcup_{Q \in \mathcal A^{L}} Q]  \subset P.$$

So we have a chain of prime ideals $ f^{-1}[\bigcup_{Q \in \mathcal A^{L}} Q]  \subset P  \subset f^{-1}[\bigcap_{Q \in \mathcal A^{R}} Q)]$ in $\text {Spec}(S)$
(note that, as in Proposition \ref{mini GD}, $f^{-1}[\bigcap_{Q \in \mathcal A^{R}} Q)] \neq S$); we have
$Q' \subset \bigcap_{Q \in \mathcal A^{R}} Q \in \text {Spec}(R)$ lying over $f^{-1}[\bigcup_{Q \in \mathcal A^{L}} Q]$ and $f^{-1}[\bigcap_{Q \in \mathcal A^{R}} Q]$
respectively, such that $\mathcal C \cup \{ Q' , \bigcap_{Q \in \mathcal A^{R}} Q\} $ is a chain in $\text {Spec}(R)$. Thus,
by SGB, there exists $Q'' \in \text {Spec}(R)$ such that
$$ Q' \subset Q'' \subset \bigcap_{Q \in
\mathcal A^{R}} Q$$ and $f^{-1}[Q'']=P$. But then $\mathcal C \cup \{ Q''\}$
is a $\mathcal D$-chain strictly containing $\mathcal C$, a
contradiction.

$(\Leftarrow)$ Let $P_{1} \subset P_{2} \subset P_{3} \in \text {Spec}(S)$ and let $Q_{1} \subset Q_{3} \in \text {Spec}(R)$ such that $Q_{1}$ is lying over $P_{1}$
and $Q_{3}$ is lying over $P_{3}$.
Assume to the contrary that there is no prime ideal $Q_{1} \subset Q_{2} \subset Q_{3}$ lying over $P_{2}$.
Let $\mathcal C$ denote a maximal $\{ P_{1}, P_{2}, P_{3}\}$-chain containing $\{ Q_{1},Q_{3}\}$.
Let $\mathcal A$ be the cut defined by $\mathcal A^{L} =\{ Q \in \mathcal C \mid f^{-1}[Q] =P_{1}\}$ and
$\mathcal A^{R} =\{ Q \in \mathcal C \mid f^{-1}[Q] =P_{3}  \}$ (note that, by our assumption, for all $Q \in \mathcal C, f^{-1}[Q] \neq P_{2})$.
Note that $\mathcal A^{L} \neq \emptyset$ and $\mathcal A^{R} \neq \emptyset$.
Hence, $ f^{-1}[Q_{l}] \subset P_{2} \subset f^{-1}[Q_{r}]$ for all $Q_{l} \in \mathcal A^{L}$ and $Q_{r} \in \mathcal A^{R}$, a contradiction.

\end{proof}

Using Propositions \ref{mini GD} and \ref{mini SGB} the following corollary is obvious.

\begin{cor} Assume that $f$ satisfies GD and SGB; then $f$ satisfies GGD (generalized going down).
Moreover, if $\mathcal D \subseteq \text {Spec}(S)$ is a chain of prime ideals whose final element is $P$
and $Q \in \text {Spec}(R)$ lying over $P$, then every maximal $\mathcal D$-chain
containing $Q$ (and in particular if $Q$ is its final element) is a cover of $\mathcal D$.

\end{cor}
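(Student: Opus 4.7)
The plan is to reduce the ``moreover'' statement — from which GGD follows immediately — to Propositions~\ref{mini GD} and~\ref{mini SGB}. Let $\mathcal D \subseteq \text{Spec}(S)$ be a chain with greatest element $P$, let $Q \in \text{Spec}(R)$ with $f^{-1}[Q]=P$, and let $\mathcal C$ be a maximal $\mathcal D$-chain containing $Q$; such chains exist by a Zorn argument started from $\{Q\}$, in the style of Lemma~\ref{there exists C maximal over D}, and this Zorn argument is exactly what yields GGD once coverage is established. Fix an arbitrary $P' \in \mathcal D$; the goal is to produce $Q'' \in \mathcal C$ with $f^{-1}[Q''] = P'$.

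The case $P' = P$ is handled by taking $Q'' = Q$, so I assume $P' \subsetneq P$ and argue by contradiction, supposing that no member of $\mathcal C$ lies over $P'$. Every $Q'' \in \mathcal C$ has $f^{-1}[Q'']$ strictly comparable to $P'$, since both lie in the chain $\mathcal D$, so I partition $\mathcal C$ into $\mathcal A^L = \{Q'' \in \mathcal C : f^{-1}[Q''] \subsetneq P'\}$ and $\mathcal A^R = \{Q'' \in \mathcal C : f^{-1}[Q''] \supsetneq P'\}$. By the order-preserving property of $Q'' \mapsto f^{-1}[Q'']$ (Remark~\ref{order preserving}), $\mathcal A = (\mathcal A^L, \mathcal A^R)$ is a genuine cut of $\mathcal C$. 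It is nontrivial: $Q \in \mathcal A^R$ because $f^{-1}[Q] = P \supsetneq P'$ — this is where I use crucially that $P$ is the top of $\mathcal D$ — and Proposition~\ref{mini GD}, available under GD, supplies some $Q' \in \mathcal C$ with $f^{-1}[Q'] \subseteq P'$, which by the contradiction hypothesis satisfies $f^{-1}[Q'] \subsetneq P'$, placing $Q' \in \mathcal A^L$.

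Now I apply Proposition~\ref{mini SGB}, available under SGB, to the cut $\mathcal A$ together with the element $P' \in \mathcal D$. This yields either some $Q_1 \in \mathcal A^L$ with $P' \subseteq f^{-1}[Q_1]$, directly contradicting $f^{-1}[Q_1] \subsetneq P'$, or some $Q_2 \in \mathcal A^R$ with $f^{-1}[Q_2] \subseteq P'$, directly contradicting $f^{-1}[Q_2] \supsetneq P'$. Either way we reach a contradiction, so some $Q'' \in \mathcal C$ does lie over $P'$, and $\mathcal C$ is a cover of $\mathcal D$. I do not anticipate any serious obstacle; the only non-cosmetic step is recognizing the correct cut of $\mathcal C$ to which Proposition~\ref{mini SGB} should be applied — once that is done, the remainder is bookkeeping around the trivial case $P' = P$ and the verification that $(\mathcal A^L, \mathcal A^R)$ is really a cut.
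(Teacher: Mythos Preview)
Your proof is correct and follows exactly the route the paper intends: the paper's own proof is the single sentence ``Using Propositions~\ref{mini GD} and~\ref{mini SGB} the following corollary is obvious,'' and you have supplied precisely the details behind that sentence --- the nonemptiness of $\mathcal A^R$ via $Q$, the nonemptiness of $\mathcal A^L$ via Proposition~\ref{mini GD}, and the contradiction via Proposition~\ref{mini SGB} applied to the resulting cut. The only cosmetic point is that the order-preserving remark is not quite the reason $(\mathcal A^L,\mathcal A^R)$ is a cut --- what you actually use is that $\mathcal C$ is a chain and preimages respect inclusion --- but the argument itself is sound.
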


Note that the dual of the previous corollary is also valid.

We shall now present sufficient conditions for a GU (and GD) homomorphism to satisfy LO.

\begin{lem} \label{GU + f^-1[0] contained in P implies LO} Assume that $f$ satisfies GU and let $P \in \text {Spec}(S)$ such that
$f^{-1}[\{ 0 \} ] \subseteq P$. Then there exists $Q \in \text {Spec}(R)$ lying over $P$; in particular, if $f^{-1}[\{ 0 \} ] \subseteq P$ for all $P \in \text {Spec}(S)$,
then $f$ satisfies LO.
\end{lem}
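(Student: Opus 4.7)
The plan is to reduce the statement to GU: I will first produce some prime $Q_0 \in \text{Spec}(R)$ whose contraction is a prime of $S$ contained in $P$, and then apply GU (if needed) to climb from $f^{-1}[Q_0]$ up to $P$ itself.

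To produce $Q_0$, I would use the standard multiplicatively-closed-set / Zorn argument. Set $T = f[S \setminus P]$, which is multiplicatively closed and contained in $Z(R)$ since $f$ is a ring homomorphism and $f[S]\subseteq Z(R)$. The hypothesis $f^{-1}[\{0\}] \subseteq P$ is exactly what guarantees $0 \notin T$: if $s \in S \setminus P$ had $f(s) = 0$, then $s \in \ker f \subseteq P$, contradiction. The family of ideals of $R$ disjoint from $T$ is nonempty (it contains $\{0\}$) and closed under unions of chains, so Zorn's Lemma yields a maximal member $Q_0$. Using Lemma \ref{alphar in Q implies one is in Q} in the usual way (exploiting $T \subseteq Z(R)$), one checks that $Q_0$ must be prime. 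Since $1 \in S \setminus P$ forces $f(1) \in T$, we have $f(1)\notin Q_0$, so $f^{-1}[Q_0] \neq S$; thus $f^{-1}[Q_0] \in \text{Spec}(S)$, and the condition $Q_0 \cap T = \emptyset$ translates precisely to $f^{-1}[Q_0] \subseteq P$.

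Set $P_0 = f^{-1}[Q_0]$. If $P_0 = P$ we are already done. Otherwise $P_0 \subsetneq P$, and GU applied to the pair $P_0 \subsetneq P$ with the prime $Q_0$ lying over $P_0$ produces a prime $Q \supsetneq Q_0$ of $R$ lying over $P$. The \emph{in particular} clause is then immediate: if $\ker f \subseteq P$ for every $P \in \text{Spec}(S)$, the first assertion applies to each such $P$, which is exactly the definition of LO.

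The main obstacle I expect is the Zorn step yielding $Q_0$: although the "maximal ideal missing a multiplicative set is prime" argument is classical, in our noncommutative setting one must check it carefully, and this is precisely the place where the standing hypothesis $f[S] \subseteq Z(R)$ (which ensures $T \subseteq Z(R)$, so that Lemma \ref{alphar in Q implies one is in Q} applies) is used. Every other step is a direct application of a hypothesis or a previously established lemma.
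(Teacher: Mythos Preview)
Your argument is correct and is essentially the paper's own: the ideals of $R$ disjoint from $T=f[S\setminus P]$ are exactly the members of the paper's family $\mathcal Z=\{I\lhd R\mid f^{-1}[I]\subseteq P\}$, so your Zorn step is the same one, and the only cosmetic difference is that the paper observes the maximal element $Q'$ must itself already lie over $P$ (otherwise GU would produce a strictly larger member of $\mathcal Z$, contradicting maximality). One small correction: the fact that an ideal maximal with respect to missing a multiplicative set is prime holds in any ring via the standard $(Q_0+I)(Q_0+J)\subseteq Q_0$ argument and uses neither Lemma~\ref{alphar in Q implies one is in Q} (which presupposes primality) nor the centrality of $T$.
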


\begin{proof} The set $\mathcal Z=\{ I \lhd R \mid f^{-1} [ I] \subseteq P\}$ is not empty and satisfies the conditions of Zorn's Lemma.
Thus, there exists a maximal element $Q' \in \mathcal Z $; it is standard to check that $Q'$ is a prime ideal of $R$.
Now, $Q'$ must be lying over $P$; since otherwise, by GU there exists $Q' \subset Q \in \text {Spec}(R)$ lying over $P$.
 \end{proof}

\begin{lem} \label{GD + f[P]R neq R implies LO} Assume that $f$ is unitary and satisfies GD.
Let $P \in \text {Spec}(S)$ such that
$f[P]R \neq R$. Then there exists $Q \in \text {Spec}(R)$ lying over $P$; in particular, $f^{-1}[f[P]R]=P$. Moreover, if $f[P]R \neq R$ for all $P \in \text {Spec}(S)$,
then $f$ satisfies LO.
\end{lem}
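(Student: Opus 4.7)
The plan is to use the hypothesis $f[P]R \neq R$ to produce a prime ideal of $R$ sitting above $P$, and then use GD to pull it down to one that lies exactly over $P$.

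First I would invoke Lemma \ref{there exists a minimal prime} (or just Zorn) to obtain a prime ideal $Q_0$ of $R$ containing the proper ideal $f[P]R$. Since $f$ is unitary, $f^{-1}[Q_0]$ is a prime ideal of $S$. Because $f(P) \subseteq f[P]R \subseteq Q_0$, we have $P \subseteq f^{-1}[Q_0]$.

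Next I would split into two cases. If $f^{-1}[Q_0] = P$, then $Q_0$ itself is the desired prime lying over $P$. Otherwise $P \subsetneq f^{-1}[Q_0]$, and since $Q_0$ lies over $f^{-1}[Q_0]$, GD applied to the chain $P \subset f^{-1}[Q_0]$ yields $Q \subset Q_0$ in $\operatorname{Spec}(R)$ lying over $P$. Either way we have produced $Q \in \operatorname{Spec}(R)$ with $f^{-1}[Q] = P$.

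For the identity $f^{-1}[f[P]R] = P$: the inclusion $P \subseteq f^{-1}[f[P]R]$ is immediate from $f(P) \subseteq f[P]R$. Conversely, the $Q$ just constructed satisfies $f[P] \subseteq Q$ (as $f^{-1}[Q] = P$), and $Q$ is an ideal of $R$, so $f[P]R \subseteq Q$; thus $f^{-1}[f[P]R] \subseteq f^{-1}[Q] = P$. The final ``moreover'' clause is then just applying the first part to every $P \in \operatorname{Spec}(S)$.

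There is no real obstacle here; the only subtlety is remembering that $f^{-1}[Q_0]$ is automatically prime because $f$ is assumed unitary (otherwise one would have to worry about $f^{-1}[Q_0] = S$, which is ruled out by $P \subseteq f^{-1}[Q_0] \subsetneq S$ anyway).
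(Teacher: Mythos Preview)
Your proof is correct and follows essentially the same route as the paper's: pick a prime $Q'$ of $R$ containing the proper ideal $f[P]R$, note (using that $f$ is unitary) that $f^{-1}[Q']$ is a prime of $S$ containing $P$, and then apply GD to obtain a prime $Q \subseteq Q'$ lying over $P$; the identity $f^{-1}[f[P]R]=P$ follows because $f[P]R \subseteq Q$. The only cosmetic difference is that you split explicitly into the cases $f^{-1}[Q_0]=P$ and $P \subsetneq f^{-1}[Q_0]$, whereas the paper absorbs both into the single line ``by GD there exists a prime ideal $Q \subseteq Q'$ lying over $P$.''
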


\begin{proof} Let $Q' \in \text {Spec}(R)$ be any prime ideal containing $f[P]R$. It is obvious that $P \subseteq f^{-1}[f[P]R]$;
so, $Q'$ is lying over a prime ideal $P'$ (note that $f$ is unitary) such that $P \subseteq P'$. By GD, there exists
a prime ideal $Q \subseteq Q'$ lying over $P$. Clearly, $f[P]R \subseteq Q$; hence, $f^{-1}[f[P]R]=P$. The last assertion is clear.  \end{proof}

\begin{defn} Let $\mathcal D \subseteq \text {Spec}(S)$ be a chain of prime ideals
and let $\mathcal C  \subseteq \text {Spec}(R)$ be a $\mathcal D$-chain. We say that $\mathcal C$ is a {\it maximal cover} of
$\mathcal D$ if $\mathcal C$ is a cover of
$\mathcal D$, which is maximal with respect to containment.

\end{defn}

\begin{thm} \label{maximal D-chain covers D} $f$
satisfies GD, GU and SGB if and only if for every nonempty chain $\mathcal D \subseteq \text {Spec}(S)$ and for every
nonempty maximal $\mathcal D$-chain $\mathcal C  \subseteq \text {Spec}(R)$, $\mathcal C$ is a (maximal) cover of
$\mathcal D$.

\end{thm}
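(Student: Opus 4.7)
The plan is to prove both directions by leveraging the three "mini" propositions (\ref{mini GD}, \ref{mini GU}, \ref{mini SGB}) that already characterize each of GD, GU and SGB in terms of nonempty maximal $\mathcal D$-chains.

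For the forward direction ($\Rightarrow$), I assume $f$ satisfies GD, GU and SGB, and fix a nonempty chain $\mathcal D \subseteq \text{Spec}(S)$ together with a nonempty maximal $\mathcal D$-chain $\mathcal C$. Given $P \in \mathcal D$, the goal is to produce $Q \in \mathcal C$ with $f^{-1}[Q]=P$; assume toward a contradiction that no such $Q$ exists. Since $\mathcal D$ is a chain and $f^{-1}[Q] \in \mathcal D$ for every $Q \in \mathcal C$, one gets the disjoint decomposition $\mathcal C = A \sqcup B$ with $A = \{Q \in \mathcal C \mid f^{-1}[Q] \subset P\}$ and $B = \{Q \in \mathcal C \mid f^{-1}[Q] \supset P\}$; by Remark \ref{order preserving} the set $A$ is initial in $\mathcal C$ and $B$ is final, so $(A,B)$ is a cut. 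Proposition \ref{mini GD} applied to $P$ yields some $Q \in \mathcal C$ with $f^{-1}[Q] \subseteq P$, forcing $A \neq \emptyset$; dually, Proposition \ref{mini GU} forces $B \neq \emptyset$, so the cut is nontrivial. Proposition \ref{mini SGB} applied to $P$ and this cut then delivers either $Q_1 \in A$ with $P \subseteq f^{-1}[Q_1]$ or $Q_2 \in B$ with $f^{-1}[Q_2] \subseteq P$, each directly contradicting the defining condition of $A$ or $B$. Hence $\mathcal C$ covers $\mathcal D$; since $\mathcal C$ is already maximal as a $\mathcal D$-chain, it is automatically a maximal cover.

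For the reverse direction ($\Leftarrow$), I verify each of the three properties separately by choosing a small chain $\mathcal D$ and invoking the cover hypothesis. For GD, given $P_1 \subset P_2$ and $Q_2$ lying over $P_2$, take $\mathcal D = \{P_1, P_2\}$, note that $\{Q_2\}$ is a nonempty $\mathcal D$-chain, and extend it (by the remark following Lemma \ref{there exists C maximal over D}) to a maximal $\mathcal D$-chain $\mathcal C$. By hypothesis $\mathcal C$ covers $\mathcal D$, so some $Q_1 \in \mathcal C$ satisfies $f^{-1}[Q_1]=P_1$; since $\mathcal C$ is a chain, $Q_1$ and $Q_2$ are comparable, and order-preservation (Remark \ref{order preserving}) together with $P_1 \subsetneq P_2$ forces $Q_1 \subsetneq Q_2$. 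The argument for GU is entirely dual. For SGB, given $P_1 \subset P_2 \subset P_3$ and $Q_1 \subset Q_3$ lying over $P_1, P_3$, take $\mathcal D=\{P_1,P_2,P_3\}$, extend the $\mathcal D$-chain $\{Q_1,Q_3\}$ to a maximal $\mathcal D$-chain $\mathcal C$, apply the cover hypothesis to produce $Q_2 \in \mathcal C$ lying over $P_2$, and use order-preservation to conclude $Q_1 \subsetneq Q_2 \subsetneq Q_3$.

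I do not foresee a substantive obstacle. The earlier mini-propositions have individually encoded GD, GU and SGB as statements about maximal $\mathcal D$-chains, so the content has essentially been distributed into those lemmas. The only delicate point in the forward direction is the case analysis based on whether $P$ sits below, above, or strictly between the images of elements of $\mathcal C$; and the only delicate point in the reverse direction is extending a prescribed $\mathcal D$-chain to a maximal one, which is a routine Zorn's Lemma application already recorded.
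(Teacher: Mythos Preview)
Your proof is correct and follows essentially the same route as the paper. The forward direction in the paper is organized as a three-way case split (either $P$ lies strictly below all images, strictly above all images, or strictly between two), each case disposed of by the corresponding mini-proposition; your version simply rephrases this by using Propositions \ref{mini GD} and \ref{mini GU} to show $A,B\neq\emptyset$ and then invoking Proposition \ref{mini SGB}---logically the same argument. For the reverse direction the paper merely writes ``It is obvious,'' whereas you spell out the standard verification via small chains $\{P_1,P_2\}$ and $\{P_1,P_2,P_3\}$, which is exactly what is intended.
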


\begin{proof} $(\Rightarrow)$ Let $\mathcal D \subseteq \text {Spec}(S)$ be a nonempty chain and
let $\mathcal C  \subseteq \text {Spec}(R)$ be a nonempty maximal $\mathcal D$-chain. 
Assume to the contrary that $\mathcal C$ is
not a cover of $\mathcal D$. So there
exists a prime ideal $P \in \mathcal D \setminus \{f^{-1}[Q]\}_{Q \in \mathcal C}$.
Obviously $\{f^{-1}[Q]\}_{Q \in \mathcal C} \cup \{ P\}$ is a chain. We have the following three possibilities:

1. $P \subset f^{-1}[Q]$ for all $Q \in \mathcal C$. However, by assumption $f$ satisfies GD and thus by Proposition \ref{mini GD},
this situation is impossible.

2. $f^{-1}[Q] \subset P$ for all $Q \in \mathcal C$. However, by assumption $f$ satisfies GU and thus by Proposition \ref{mini GU},
this situation is impossible.

3. There exist $Q', Q'' \in \mathcal C$ such that $f^{-1}[Q'] \subset P
\subset f^{-1}[Q'']$. So, let $\mathcal A$ be the cut defined by $\mathcal A^{L} =\{ Q \in \mathcal C \mid f^{-1}[Q] \subset P \}$ and
$\mathcal A^{R} =\{ Q \in \mathcal C \mid P \subset f^{-1}[Q]  \}$. Note that $\mathcal A^{L} \neq \emptyset$ and $\mathcal A^{R} \neq \emptyset$.
Thus $f^{-1}[ Q_{l}] \subset P \subset f^{-1}[ Q_{r}]$ for
all $Q_{l} \in \mathcal A^{L}$ and $Q_{r} \in \mathcal A^{R}$. However, by assumption $f$ satisfies SGB and thus by Proposition \ref{mini SGB},
this situation is impossible.

Finally, it is obvious that $\mathcal C$ is a maximal cover of $\mathcal D$, since $\mathcal C$ is a cover of $\mathcal D$ and a maximal $\mathcal D$-chain.

$(\Leftarrow)$ It is obvious.



\end{proof}

We note that by [KO, Proposition 4 and Corollary 11], if $S \subseteq R$ are commutative rings such that $R$ satisfies GU over $S$, then for
every chain of prime ideals $\mathcal D \subseteq \text {Spec}(S)$ there exists
a chain of prime ideals in $\text {Spec}(R)$ covering it. However, if for example $R$ does not satisfy GD over $S$, then one can find 
a chain $\mathcal D \subseteq \text {Spec}(S)$ such that not \textbf{every} maximal $\mathcal D$-chain is a cover of $\mathcal D$.
See Example \ref{ex GU LO but no GD}.

We shall now present one of the main results of this paper.

\begin{thm} \label{LO, INC, GD, GU and SGB imply perfect cover} Assume that $f$
satisfies LO, INC, GD, GU and SGB. Let $\mathcal D$ denote a chain of prime ideals in $\text {Spec} (S)$. Then there
exists a perfect maximal cover of $\mathcal D$. Moreover, any maximal $\mathcal
D$-chain is a perfect maximal cover.

\end{thm}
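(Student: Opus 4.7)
The plan is to observe that all three strands of the theorem have essentially been established in the preceding results, and the proof amounts to assembling them. LO supplies the existence of a nonempty maximal $\mathcal{D}$-chain through Lemma \ref{there exists C maximal over D}; the combination GD, GU, SGB forces every nonempty maximal $\mathcal{D}$-chain to be a cover of $\mathcal{D}$ through Theorem \ref{maximal D-chain covers D}; and INC promotes that cover to a \emph{perfect} cover via Remark \ref{mini inc}.

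Concretely, let $\mathcal{C}$ be any maximal $\mathcal{D}$-chain. I would first dispatch the trivial case $\mathcal{D}=\emptyset$, where $\mathcal{C}=\emptyset$ is vacuously a perfect maximal cover. For $\mathcal{D}\neq\emptyset$, Lemma \ref{there exists C maximal over D} (using LO) delivers a nonempty maximal $\mathcal{D}$-chain, and the same argument shows that \emph{every} maximal $\mathcal{D}$-chain is forced to be nonempty, since the empty chain could be strictly extended by any prime lying over some $P\in\mathcal{D}$. Invoking Theorem \ref{maximal D-chain covers D} then yields that the contraction map
\[
\varphi:\mathcal{C}\longrightarrow\mathcal{D},\qquad Q\longmapsto f^{-1}[Q],
\]
is surjective.

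For injectivity, the key observation is that since $\mathcal{C}$ is a $\mathcal{D}$-chain, we have $f^{-1}[Q]\in\mathcal{D}\subseteq\mathrm{Spec}(S)$ for every $Q\in\mathcal{C}$; in particular $f^{-1}[Q]\neq S$. Thus the conditional hypothesis appearing in Remark \ref{mini inc} is automatic on \emph{all} of $\mathcal{C}$, and INC forces $\varphi$ to be injective on the whole chain. Combined with surjectivity, this makes $\varphi$ bijective, so $\mathcal{C}$ is a perfect cover; and it is maximal by hypothesis, hence a perfect maximal cover. The existence part of the theorem is then immediate from the existence of a maximal $\mathcal{D}$-chain.

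There is no substantial obstacle in this proof — the heavy lifting was done in Theorem \ref{maximal D-chain covers D} and its input propositions. The only subtlety worth spelling out is that Remark \ref{mini inc} is stated with the side condition $f^{-1}[Q]\neq S$, and one must explicitly note that this holds automatically for elements of a $\mathcal{D}$-chain because $\mathcal{D}$ consists of prime ideals of $S$; without this observation one might worry that injectivity only holds on a proper subchain of $\mathcal{C}$, spoiling the bijection onto $\mathcal{D}$.
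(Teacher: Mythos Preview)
Your proof is correct and follows essentially the same route as the paper: handle the empty case, invoke Lemma~\ref{there exists C maximal over D} (LO) for existence, Theorem~\ref{maximal D-chain covers D} (GD, GU, SGB) for surjectivity, and Remark~\ref{mini inc} (INC) for injectivity. You are in fact slightly more careful than the paper in explicitly noting that every maximal $\mathcal{D}$-chain is nonempty when $\mathcal{D}\neq\emptyset$, and in verifying the side condition $f^{-1}[Q]\neq S$ needed to apply Remark~\ref{mini inc}.
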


\begin{proof} If $\mathcal D$ is empty then a maximal $\mathcal D$-chain must be empty and the assertion is clear.
We may therefore assume that $\mathcal D$ is not empty. $f$ satisfies LO; thus by Lemma \ref{there exists C maximal over D} there exists a nonempty maximal $\mathcal D$-chain.
Now, let $\mathcal C$ be any maximal $\mathcal D$-chain.
$f$ satisfies GD, GU and SGB and therefore by Theorem \ref{maximal D-chain covers D}, $\mathcal C$ is a maximal cover of $\mathcal D$.
Finally,  $f$ satisfies INC; hence by Lemma \ref{mini inc}, $\mathcal C$ is a perfect maximal cover of $\mathcal D$.

\end{proof}

We can now present another main result of this paper. The following corollary closes the circle.

\begin{cor} \label{equivalent conditions} The following conditions are equivalent:

1. $f$ satisfies layers 1,2 and 3 properties.

2. $f$ satisfies LO, INC, GD, GU and SGB.

3. For every chain $\mathcal D \subseteq \text {Spec} (S)$ and for every maximal $\mathcal
D$-chain $\mathcal C \subseteq \text {Spec} (R)$, $\mathcal C$ is a perfect maximal cover of $\mathcal D$.

4. For every chain $\mathcal D \subseteq \text {Spec} (S)$ and for every maximal $\mathcal
D$-chain $\mathcal C \subseteq \text {Spec} (R)$, $|C|=|D|$.

\end{cor}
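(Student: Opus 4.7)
The plan is to close the circle by arguing $(1)\Leftrightarrow(2)\Rightarrow(3)\Rightarrow(4)\Rightarrow(1)$, drawing on the results already proven earlier in this section so that very little new work remains.

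First, the equivalence $(1)\Leftrightarrow(2)$ is already recorded as Proposition \ref{layers iff properties}. The implication $(2)\Rightarrow(3)$ is precisely the content of Theorem \ref{LO, INC, GD, GU and SGB imply perfect cover}: under LO, INC, GD, GU and SGB, any maximal $\mathcal D$-chain is a perfect maximal cover of $\mathcal D$. So the first three conditions link up by direct citation, with no new argument required.

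For $(3)\Rightarrow(4)$, I would simply unfold the definition of a perfect cover: the map $Q\mapsto f^{-1}[Q]$ from $\mathcal C$ to $\mathcal D$ is a bijection, which yields $|\mathcal C|=|\mathcal D|$ immediately. This step is a one-liner and needs no further argument.

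The only implication requiring a genuine (though short) argument is $(4)\Rightarrow(1)$, and this is the step I would present most carefully. Given condition (4), I would specialize to chains $\mathcal D\subseteq\text{Spec}(S)$ of cardinality $n$ for each $n\in\{1,2,3\}$: then every maximal $\mathcal D$-chain $\mathcal C$ satisfies $|\mathcal C|=|\mathcal D|=n$, which is precisely the layer $n$ property of Definition \ref{def_layers}. Hence $f$ satisfies layers $1$, $2$ and $3$ simultaneously, establishing (1). There is no real obstacle here; the potential subtlety is merely to note that the empty maximal $\mathcal D$-chain case is automatically handled because condition (4) forces $|\mathcal C|=|\mathcal D|\ge 1$ when $\mathcal D$ is nonempty, so applied to a single-prime $\mathcal D$ it in particular delivers LO (the existence part of layer $1$) and not merely INC (the uniqueness part). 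With this observation, the four implications fit together and the corollary follows.
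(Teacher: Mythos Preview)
Your proposal is correct and follows essentially the same route as the paper: the paper also closes the circle $(1)\Rightarrow(2)\Rightarrow(3)\Rightarrow(4)\Rightarrow(1)$, citing Proposition \ref{layers iff properties} and Theorem \ref{LO, INC, GD, GU and SGB imply perfect cover} for the first two implications and declaring the last two obvious. Your additional remarks unpacking $(3)\Rightarrow(4)$ and $(4)\Rightarrow(1)$ are accurate and simply spell out what the paper leaves implicit.
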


\begin{proof} $(1) \Rightarrow (2)$: By Proposition \ref{layers iff properties}. $(2) \Rightarrow (3)$: By Theorem \ref{LO, INC, GD, GU and SGB imply perfect cover}.
$(3) \Rightarrow (4)$ and $(4) \Rightarrow (1)$ are obvious.

\end{proof}


\begin{ex} Notation and assumptions as in Example \ref{example from qv theory}. Then $R$ satisfies
LO, INC, GU, GD, and SGB over $O_{v}$. Therefore, for every chain $\mathcal D \subseteq \text {Spec} (O_{v})$ and for every maximal $\mathcal
D$-chain $\mathcal C \subseteq \text {Spec} (R)$, $\mathcal C$ is a perfect maximal cover of $\mathcal D$.
\end{ex}


We note that one can easily construct examples of functions that satisfy some of the properties. We shall present now an example in
which the cardinality of the set of all chains in $ \text {Spec} (S)$ that satisfy property 4 of Corollary \ref{equivalent conditions} is equal to
the cardinality of $ P(\text {Spec} (S))$, the power set of $ \text {Spec} (S)$, still without $f$ satisfying the equivalent conditions of Corollary \ref{equivalent conditions} (because property 4 is not fully satisfied).

\begin{ex} Let $A$ be a valuation ring of a field $F$ with $ |\text {Spec} (A)|=a$, an infinite cardinal. Let $Q_{0}$ denote the maximal ideal of $A$ and assume that
$Q_{0}$ has an immediate predecessor, namely, a prime ideal $Q_{1} \subset Q_{0}$, such that there is no prime ideal between $ Q_{0}$ and $Q_{1}$.
Let $B=A_{Q_{1}}$, a valuation ring of $F$ with maximal ideal $Q_{1}$. Consider $A$ as a subring of $B$ and let $f : A \rightarrow B$ be the map defined by $f(a)=a$ for all $a \in A$. It is well known that there is a bijective map $Q \rightarrow Q \cap A$ from $\text {Spec} (B)$ to $\text {Spec} (A) \setminus \{ Q_{0}\}$.
Let $$T=\{ \mathcal D \subseteq \text {Spec} (A) \mid  \text { for every maximal } \mathcal
D  \text {-chain } \mathcal C \subseteq \text {Spec} (B), \ |C|=|D| \};$$
It is not difficult to see that
$$P(\text {Spec} (A)) \setminus T= \{  \mathcal E \subseteq \text {Spec} (A) \mid Q_{0} \in \mathcal E, |\mathcal E|=n \text { for some } n \in \Bbb N \}.$$
It is obvious that $|P(\text {Spec} (A)) \setminus T|=a$ and thus
$|T|=|P(\text {Spec} (A))|=2^{a}$. 
Note that $f$ does not satisfy LO and GU, although it does satisfy INC, GD and SGB.

Finally, denote $T'=\{ \mathcal D \subseteq \text {Spec} (A) \mid  \text { for every maximal } \mathcal
D  \text {-chain } \mathcal C \subseteq \text {Spec} (B), \ \mathcal C \text { is a perfect maximal cover of } \mathcal D \}$. Then
$T' \subset T$; in fact, $T'=P(\text {Spec} (A) \setminus \{ Q_{0}\} ) $. In other words, the set of chains in $\text {Spec} (A)$ that satisfy
property 3 of Corollary \ref{equivalent conditions} is strictly contained in the set of chains in $\text {Spec} (A)$ that satisfy
property 4 of Corollary \ref{equivalent conditions}.

\end{ex}


We close this paper by presenting some results regarding maximal chains in $\text {Spec}(S)$.
We shall need to assume that $f$ is unitary.

\begin{lem} \label{maximal cover over maximal is maximal} Assume that $f$ is unitary. Let $\mathcal D \subseteq \text {Spec}(S)$ be a maximal chain
and let $\mathcal C  \subseteq \text {Spec}(R)$ be a maximal cover of $\mathcal D$. Then $\mathcal C$ is a maximal chain in $\text {Spec}(R)$.

\end{lem}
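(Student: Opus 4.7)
The plan is to proceed by contradiction. Suppose $\mathcal{C}$ is not a maximal chain in $\text{Spec}(R)$; then there exists $Q^* \in \text{Spec}(R)\setminus \mathcal{C}$ such that $\mathcal{C}\cup\{Q^*\}$ is still a chain. Because $f$ is assumed unitary, $P^* := f^{-1}[Q^*]$ is a prime ideal of $S$ (and in particular $P^* \neq S$), a fact discussed in the Introduction.

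Next I would use Remark \ref{order preserving}: the map $Q\mapsto f^{-1}[Q]$ is order-preserving on any chain of prime ideals of $R$. Since $\mathcal{C}$ is simultaneously a $\mathcal{D}$-chain and a cover of $\mathcal{D}$, the image of $\mathcal{C}$ under this map is exactly $\mathcal{D}$. Applying the map to the enlarged chain $\mathcal{C}\cup\{Q^*\}$ shows that $\mathcal{D}\cup\{P^*\}$ is a chain in $\text{Spec}(S)$.

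The remaining argument is a two-case contradiction according to whether $P^* \in \mathcal{D}$ or not. If $P^* \notin \mathcal{D}$, then $\mathcal{D}\cup\{P^*\}$ is a chain in $\text{Spec}(S)$ strictly containing $\mathcal{D}$, contradicting the maximality of $\mathcal{D}$. If on the other hand $P^*\in\mathcal{D}$, then $\mathcal{C}\cup\{Q^*\}$ is itself a $\mathcal{D}$-chain (since $f^{-1}[Q^*]=P^*\in\mathcal{D}$), and it remains a cover of $\mathcal{D}$ because being a cover is a surjectivity condition, clearly preserved under enlarging the chain; this contradicts the maximality of $\mathcal{C}$ as a cover of $\mathcal{D}$.

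I do not expect any real obstacle here — the whole proof is a short, clean case split. The only subtle point is that one must genuinely use the unitarity of $f$ to guarantee $P^* \in \text{Spec}(S)$; without unitarity, one could have $f^{-1}[Q^*]=S$, and then the construction of the chain $\mathcal{D}\cup\{P^*\}$ in $\text{Spec}(S)$ would fail and the dichotomy would collapse.
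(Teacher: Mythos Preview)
Your proof is correct and follows essentially the same approach as the paper's: both proceed by contradiction, pick $Q^*\in\text{Spec}(R)\setminus\mathcal C$ with $\mathcal C\cup\{Q^*\}$ a chain, use unitarity to ensure $f^{-1}[Q^*]\in\text{Spec}(S)$, and reach a contradiction either with the maximality of $\mathcal C$ as a cover (when $f^{-1}[Q^*]\in\mathcal D$) or with the maximality of $\mathcal D$ (when $f^{-1}[Q^*]\notin\mathcal D$). The paper simply compresses your first case into the single observation ``since $\mathcal C$ is a maximal cover of $\mathcal D$, we have $f^{-1}[Q^*]\notin\mathcal D$'', but the underlying reasoning is identical.
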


\begin{proof} Assume to the contrary that $\mathcal C$ is
not a maximal chain in $\text {Spec}(R)$. Then there exists a prime ideal $Q' $
of $R$ such that $\mathcal C \cup \{ Q' \}$ is a chain of prime
ideals of $R$ strictly containing $\mathcal C$. However, since $\mathcal C$ is a cover of $\mathcal D$, we have
$\mathcal D = \{ f^{-1}[Q]\}_{Q \in \mathcal C}$ and since $\mathcal C$ is a maximal cover of $\mathcal D$, we have
$f^{-1}[Q'] \notin \mathcal D$. Note that, since $f$ is unitary, $f^{-1}[Q'] \in \text {Spec}(S)$. Therefore, $$  \{ f^{-1}[Q]\}_{Q \in \mathcal C} \cup \{ f^{-1}[Q']\}$$ is a chain in
$\text {Spec}(S)$ strictly containing $\mathcal D$, a contradiction.

\end{proof}

We are now able to prove that, assuming $f$ is unitary and
satisfies GD, GU and SGB, if $\mathcal
D$ is a maximal chain in $\text {Spec}(S)$ and $\mathcal C$ is any maximal $\mathcal D$-chain, then
$\mathcal C$ must be a maximal chain in $\text {Spec}(R)$.

\begin{cor} \label{maximal D-chain implies maximal} Assume that $f$ is unitary and
satisfies GD, GU and SGB. Let $\mathcal
D$ be a maximal chain in $\text {Spec}(S)$ (so $\mathcal
D$ is not empty) and let $\mathcal C \subseteq \text {Spec}(R)$
be a nonempty maximal $\mathcal D$-chain. Then $\mathcal C$ is
a maximal cover of $\mathcal
D$ and a maximal chain in $\text {Spec}(R)$. 

\end{cor}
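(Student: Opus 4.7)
The plan is to combine two results already proved in the paper: Theorem \ref{maximal D-chain covers D}, which converts the hypotheses GD, GU, SGB into the statement that any nonempty maximal $\mathcal{D}$-chain is a (maximal) cover of $\mathcal{D}$, and Lemma \ref{maximal cover over maximal is maximal}, which promotes a maximal cover of a maximal chain in $\text{Spec}(S)$ to a maximal chain in $\text{Spec}(R)$ (using unitarity of $f$). Neither step requires new work; the only thing to verify is that the hypotheses of both results are in force.

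First I would invoke Theorem \ref{maximal D-chain covers D}. Since $f$ satisfies GD, GU and SGB, and since $\mathcal{C}$ is a nonempty maximal $\mathcal{D}$-chain over the nonempty chain $\mathcal{D}$, the theorem gives that $\mathcal{C}$ is a cover of $\mathcal{D}$, and being simultaneously a cover and a maximal $\mathcal{D}$-chain, it is a maximal cover of $\mathcal{D}$. This already proves the first half of the conclusion.

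Next, I would apply Lemma \ref{maximal cover over maximal is maximal} to the pair $(\mathcal{D}, \mathcal{C})$. Its hypotheses are that $f$ is unitary, $\mathcal{D}$ is a maximal chain in $\text{Spec}(S)$, and $\mathcal{C}$ is a maximal cover of $\mathcal{D}$ — all of which are now available (the last from the previous paragraph, the other two from the statement). The conclusion is precisely that $\mathcal{C}$ is a maximal chain in $\text{Spec}(R)$, closing the corollary.

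There is essentially no obstacle; this corollary is purely a packaging of the two prior results, and its role is to record the maximal-chain consequence of Theorem \ref{maximal D-chain covers D} in the case where one starts from a maximal chain downstairs. The only subtle point worth flagging is why $\mathcal{C}$ being a maximal $\mathcal{D}$-chain already forces it to be a maximal cover once we know it is a cover: by definition, no chain in $\text{Spec}(R)$ strictly containing $\mathcal{C}$ remains a $\mathcal{D}$-chain, and in particular no such chain remains a cover of $\mathcal{D}$; this is the observation used silently at the end of the first step.
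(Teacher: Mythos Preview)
Your proposal is correct and matches the paper's own proof exactly: the paper also simply cites Theorem~\ref{maximal D-chain covers D} to get that $\mathcal{C}$ is a maximal cover of $\mathcal{D}$, and then Lemma~\ref{maximal cover over maximal is maximal} to conclude that $\mathcal{C}$ is a maximal chain in $\text{Spec}(R)$. Your extra remark explaining why a maximal $\mathcal{D}$-chain that is a cover is automatically a maximal cover is a welcome clarification, but otherwise there is nothing to add.
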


\begin{proof} By Theorem \ref{maximal D-chain covers D}, $\mathcal C$ is a maximal cover of $\mathcal D$. By Lemma \ref{maximal cover over maximal is maximal}, $\mathcal C$ is a maximal chain in $\text {Spec}(R)$.

\end{proof}

\begin{cor} \label{there exists maximal D-chain which is maximal} Assume that $f$ is unitary and
satisfies LO, GD, GU and SGB. Let $\mathcal D$ denote a maximal chain of prime ideals in $\text {Spec}(S)$. Then there
exists a maximal cover of $\mathcal D$, which is a maximal chain of
prime ideals in $\text {Spec}(R)$.

\end{cor}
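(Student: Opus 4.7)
The plan is to deduce this corollary by chaining together two earlier results: Lemma \ref{there exists C maximal over D} (to produce a nonempty maximal $\mathcal D$-chain from LO) and Corollary \ref{maximal D-chain implies maximal} (to upgrade that maximal $\mathcal D$-chain to a maximal cover of $\mathcal D$ and a maximal chain in $\text{Spec}(R)$, using that $f$ is unitary and satisfies GD, GU, SGB). So essentially no new argument is required; the corollary is the natural combination of the LO-based existence statement and the structure theorem for maximal $\mathcal D$-chains over maximal $\mathcal D$.

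First I would note that $\mathcal D$ is nonempty, since any maximal chain of prime ideals of $S$ contains at least a maximal ideal of $S$ (which exists because $S$ is nontrivial). Next, invoking the hypothesis that $f$ satisfies LO together with Lemma \ref{there exists C maximal over D}, I would obtain a nonempty maximal $\mathcal D$-chain $\mathcal C \subseteq \text{Spec}(R)$.

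Then, since $f$ is unitary and satisfies GD, GU and SGB, and since $\mathcal D$ is a maximal chain in $\text{Spec}(S)$, Corollary \ref{maximal D-chain implies maximal} applies directly to $\mathcal C$. It yields simultaneously that $\mathcal C$ is a maximal cover of $\mathcal D$ and that $\mathcal C$ is a maximal chain of prime ideals in $\text{Spec}(R)$, which is exactly the conclusion sought.

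There is no real obstacle here; the statement is essentially a packaging result. The one place where I would pause to double-check is the nonemptiness issue: LO is what guarantees that the maximal $\mathcal D$-chain produced by Zorn's Lemma in Lemma \ref{there exists C maximal over D} is nonempty, and this nonemptiness is precisely what Corollary \ref{maximal D-chain implies maximal} requires in its hypothesis. Once that is observed, the proof collapses into a two-line citation of the earlier results.
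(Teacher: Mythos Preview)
Your proposal is correct and follows the same route as the paper's proof: invoke Lemma \ref{there exists C maximal over D} (using LO) to obtain a nonempty maximal $\mathcal D$-chain, then apply Corollary \ref{maximal D-chain implies maximal} to conclude it is a maximal cover of $\mathcal D$ and a maximal chain in $\text{Spec}(R)$. The only addition you make is the explicit remark that $\mathcal D$ is nonempty, which the paper leaves implicit.
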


\begin{proof} By Lemma \ref{there exists C maximal over D} there exists a nonempty maximal $\mathcal
D$-chain, and by Corollary \ref{maximal D-chain implies maximal} it
is a maximal chain of prime ideals in $\text {Spec}(R)$, covering $\mathcal D$.

\end{proof}

Note: it is not difficult to see that if $f$ is not unitary then the results presented in \ref{maximal cover over maximal is maximal},
\ref{maximal D-chain implies maximal} and \ref{there exists maximal D-chain which is maximal} are no longer valid.

Department of Mathematics, Sce College, Ashdod 77245, Israel.

{\it E-mail address: sarusss1@gmail.com}

\end{document}